
\documentclass[reqno]{amsart}
\usepackage{amssymb,amsmath,amsthm,amscd,latexsym,amsfonts}
\usepackage{mathtools}
\usepackage[english]{babel}
\usepackage[T1]{fontenc}
\usepackage{inputenc}
\usepackage[arrow,matrix]{xy}
\usepackage{amsaddr}
\usepackage{graphicx}
\usepackage{cite}
\tolerance=5000 \topmargin -1cm \oddsidemargin=0,5cm
\evensidemargin=-0,2cm \textwidth 15.6cm \textheight 24cm
\linespread{1.0}
\vfuzz2pt 
\hfuzz2pt 
\newtheorem{thm}{Theorem}[section]
\newtheorem{cor}[thm]{Corollary}
\newtheorem{lem}[thm]{Lemma}
\newtheorem{prop}[thm]{Proposition}

\theoremstyle{definition}
\newtheorem{defn}[thm]{Definition}

\theoremstyle{remark}
\newtheorem{rem}[thm]{Remark}
\numberwithin{equation}{section}
\DeclareMathOperator{\Der}{Der}

\DeclareMathOperator{\Orb}{Orb}
\DeclareMathOperator{\Ann}{Ann}
\DeclareMathOperator{\gr}{gr}
\DeclareMathOperator{\cent}{Center}
\DeclareMathOperator{\GL}{GL}
\DeclareMathOperator{\Leib}{\texttt{Leib}}
\begin{document}

\title[Solvable Leibniz algebras with naturally graded non-Lie $p$-filiform nilradicals]{Solvable Leibniz algebras with naturally graded non-Lie $p$-filiform nilradicals}
\author{J.~Q.~Adashev\textsuperscript{1}, M.~Ladra\textsuperscript{2}, B.~A.~Omirov\textsuperscript{1}}

\address{\textsuperscript{1}Institute of Mathematics, National University of Uzbekistan, 100125, Tashkent,  Uzbekistan, adashevjq@mail.ru, omirovb@mail.ru}
\address{\textsuperscript{2}Department of Algebra, University of Santiago de Compostela, 15782 Santiago de Compostela, Spain, manuel.ladra@usc.es}

\begin{abstract} In this paper solvable Leibniz algebras with naturally graded non-Lie $p$-filiform $(n-p\geq4)$ nilradical and with one-dimensional complemented space of nilradical are described. Moreover, solvable Leibniz algebras with abelian nilradical and extremal (minimal, maximal) dimensions of complemented space nilradical are studied. The rigidity of solvable Leibniz algebras with abelian nilradical and maximal dimension of its complemented space is proved.
\end{abstract}

\subjclass[2010] {17A32, 17A36, 17B30, 17B56.}

\keywords{Lie algebra, Leibniz algebra, solvable algebra, abelian algebra, nilradical, natural graduation, $p$-filiform algebra, group of cohomology, rigidity.}

\maketitle





\section{Introduction}

Leibniz algebras are generalizations of Lie algebras and they have been introduced by Loday in \cite{Loday}
as a non-antisymmetric version of Lie algebras.  These algebras preserve a unique property of Lie algebras - the right multiplication operators are derivations.
 Since the 1993 when Loday's work was published, many researchers have been attracted to Leibniz algebras, with remarkable activity during the last decades.

From the classical theory of Lie algebras it is well known that the study of finite-dimensional Lie algebras was reduced to the nilpotent ones \cite{Mal}.
In the Leibniz algebra case we have an analogue of Levi's theorem \cite{Bar}. Namely, the decomposition of a Leibniz algebra into a semidirect sum of its solvable radical and a semisimple Lie algebra is obtained.
 The semisimple part can be described from simple Lie ideals (see \cite{Bar}) and therefore, the main problem is to study the solvable radical.
 Based on the work of \cite{Mub}, a new approach for the investigation of solvable Lie algebras by using their nilradicals is developed in the works \cite{AnCaGa1,AnCaGa1x,AnCaGa2,NdWi,SnWi,TrWi,WaLiDe} and others.
  This approach is based on the nil-independent derivations of nilradical.
   In fact, for a given solvable Leibniz algebra with a fixed nilradical the complemented space to nilradical
    has a basis with the condition that the restriction of operators of right multiplication on a basis element is nil-independent derivation of nilradical \cite{Nulfilrad}.

Since the description of finite-dimensional solvable Lie algebras is a boundless problem, lately  geometric approaches are developing.
 Relevant tools of geometric approaches are Zariski topology and natural action of linear reductive group on varieties of algebras in a such way that orbits under the action consists of isomorphic algebras.
  It is a well-known result of algebraic geometry that any algebraic variety (evidently, algebras defined via identities form an algebraic variety) is a union of a finite number of irreducible components.
 The most important algebras are those whose orbits under the action are open sets in sense of Zariski topology.
 The algebras of a variety with open orbits are important since the closures of orbits of such algebras form irreducible components of the variety.
 At the same time there exist an irreducible component which is not closure of orbit of any algebra.
  This fact does not detract the importance of algebras with open orbits.
   This is a motivation of many works focused to discovering of algebras with open orbits and to description of sufficient properties of such algebras \cite{Burde,O'Halloran1,O'Halloran2}.

In this paper under the condition to a operator of right multiplication we classify solvable Leibniz algebras with abelian nilradical and with one-dimensional complemented space of nilradical.
 The classification of solvable Leibniz algebras with abelian nilradical and maximal dimension of complemented space is obtained, as well.
The rigidity of such algebras is proved. Moreover, we present a description of solvable algebras with naturally graded non-Lie $p$-filiform nilradicals ($n-p\geq 4$)
 under some condition to operator of right multiplication to an element of one-dimensional complemented space to nilradical. The classifications problems of obtained algebras of the descriptions are studied.

Throughout the paper we consider finite-dimensional vector spaces and algebras over the field $\mathbb{C}$.
 Moreover, in the multiplication table of an algebra omitted products are assumed to be zero and if it is not noticed we shall consider non-nilpotent solvable algebras.

\section{Preliminaries}

In this section we give necessary definitions and preliminary results.

\begin{defn}[\cite{Loday}] A  vector space with bilinear bracket $(L,[-,-])$ over a field $\mathbb{F}$ is called a \emph{Leibniz algebra} if for any $x,y,z\in L$ the so-called Leibniz identity
\[ \big[x,[y,z]\big]=\big[[x,y],z\big] - \big[[x,z],y\big] \]  holds, or equivalently, $\big[[x,y],z\big]=    \big[[x,z],y\big] + \big[x,[y,z]\big]$.
\end{defn}

Here, we adopt the right Leibniz identity; since the bracket is not skew-symmetric, there exists the version corresponding to the left Leibniz identity,
 \[ \big[[x,y],z\big] =  \big[x,[y,z]\big] - \big[y,[x,z]\big] \,. \]

For examples of Leibniz algebras we refer to papers \cite{Loday} and \cite{Loday1}.

Further we will use the notation
\[ {\mathcal L}(x, y, z)=[x,[y,z]] - [[x,y],z] + [[x,z],y].\]
It is obvious that Leibniz algebras are determined by the identity
${\mathcal L}(x, y, z)=0$.

From the Leibniz identity we conclude that the elements $[x,x], [x,y]+[y,x]$ for any $x, y \in L$ lie in  $\Ann_r(L) =\{x \in L \mid [y,x] = 0,\ \text{for \ all}\ y \in L \}$,
 the \emph{right annihilator} of the Leibniz algebra $L$. Moreover, it is easy to see that $\Ann_r(L)$ is a two-sided ideal of $L$.

The two-sided ideal  $\cent(L)=\{x\in L  \mid   [x,y]=0=[y,x], \ \text{for \ all}\ y \in L \}$ is said to be the \emph{center} of $L$.

\begin{defn} \label{der} A linear map $d \colon L \rightarrow L$ of a Leibniz algebra $(L,[-,-])$ is said to be a \emph{derivation} if for all $x, y \in L$ the following  condition holds:
\begin{equation}\label{eq0}
d([x,y])=[d(x),y] + [x, d(y)].
\end{equation}

\end{defn}

Note that the right multiplication operator $\mathcal{R}_x \colon L \to L, \mathcal{R}_x(y)=[y,x],  y \in L$, is a derivation (for a left Leibniz algebra $L$,
 the left multiplication operator $\mathcal{L}_x \colon L \to L, \mathcal{L}_x(y)=[x,y],  y \in L$, is also derivations).

\begin{defn} For a given Leibniz algebra $(L,[-,-])$ the sequences of two-sided ideals defined recursively as follows:
\[L^1=L, \ L^{k+1}=[L^k,L],  \ k \geq 1, \qquad \qquad
L^{[1]}=L, \ L^{[s+1]}=[L^{[s]},L^{[s]}], \ s \geq 1,
\]
are said to be the \emph{lower central} and the \emph{derived series} of $L$, respectively.
\end{defn}

\begin{defn} A Leibniz algebra $L$ is said to be
\emph{nilpotent} (respectively, \emph{solvable}), if there exists $n\in\mathbb N$ ($m\in\mathbb N$) such that $L^{n}=0$ (respectively, $L^{[m]}=0$).
 The minimal number $n$ (respectively, $m$) with such property is said to be the \emph{index of nilpotency} (respectively, \emph{index of solvability}) of the algebra $L$.
\end{defn}

Evidently, the index of nilpotency of an $n$-dimensional nilpotent algebra is not greater than $n+1$.

\begin{defn} The maximal nilpotent ideal of a Leibniz algebra is said to be the \emph{nilradical} of the  algebra.
\end{defn}

Let $R$ be a solvable Leibniz algebra with nilradical $N$. We denote by $Q$ the complementary vector space  of the nilradical $N$ to the algebra $R$.
 Let us consider the restrictions to $N$ of the right multiplication operator on an element $x \in Q$ (denoted by $\mathcal{R}_{{x |}_{N}}$).
  From \cite{Nulfilrad} we know that for any $x \in Q$, the operator $\mathcal{R}_{{x |}_{N}}$ is a non-nilpotent  derivation of $N$.

Let $\{x_1, \dots, x_m\}$ be a basis of $Q$, then for any scalars $\{\alpha_1, \dots, \alpha_m\}\in
\mathbb{C}\setminus\{0\}$, the matrix $\alpha_1\mathcal{R}_{{x_1 |}_{N}}+\dots+\alpha_m\mathcal{R}_{{x_m|}_{N}}$ is non-nilpotent,
which means that the elements $\{x_1, \dots, x_m\}$ are nil-independent \cite{Mub}. Therefore, we have that the dimension of $Q$
 is bounded by the maximal number of nil-independent derivations of the nilradical $N$ (see \cite[Theorem 3.2]{Nulfilrad}).
  Moreover, similar to the case of Lie algebras, for a solvable Leibniz algebra $R$ the inequality $\dim N \geq \frac{\dim R}{2}$ holds.

Let $L$ be a nilpotent Leibniz algebra and $x\in L \setminus L^2$. Denote by $C(x)=(n_1,n_2,
\dots,n_k)$ the decreasing sequence which consists of the dimensions of the Jordan blocks of the operator $R_x$. On the set of such sequences we consider the lexicographic order.

\begin{defn}  The sequence $C(L)=\max\limits_{x\in L \setminus L^2}C(x)$
is called the \emph{characteristic sequence} of the Leibniz algebra $L$.
\end{defn}

Below we define the notion $p$-filiform Leibniz algebra.

\begin{defn} A Leibniz algebra $L$ is called $p$-\emph{filiform}  if
$C(L)=(n-p,\underbrace{1,\dots,1}_{p})$, where $p\geq 0$.
\end{defn}

Note that above definition, when $p>0$ agrees with the definition of $p$-filiform Lie algebras \cite{Gomez}.
Since in the case of Lie algebras there is no singly-generated algebra, the notion of $0$-filiform algebra for Lie algebras has no sense,
 while for the Leibniz algebras case in each dimension there exists up to isomorphism a unique null-filiform algebra \cite{Omir1}.

\begin{defn}  Given an $n$-dimensional $p$-filiform Leibniz algebra $L$, put $L_i=L^i/L^{i+1},\ 1\leq i\leq n-p$, and $\gr L =L_1 \oplus L_2 \oplus \cdots \oplus L_{n-p}$.
 Then $[L_i,L_j]\subseteq L_{i+j}$ and we obtain the graded algebra $\gr L$. If $\gr L$ and $L$ are isomorphic, $\gr L\cong L$, we say that $L$ is \emph{naturally graded}.
\end{defn}

Due to voluminous of the list of naturally graded $p$-filiform ($1\leq p\leq n-1$) Lie algebras we refer the reader to the work \cite{Cabezas}.

Since we shall consider naturally graded non-Lie $p$-filiform nilradical we give its classification.

\begin{thm}[\cite{p-filLeibniz}] An arbitrary $n$-dimensional naturally graded non-split $p$-filiform Leibniz algebra $(n-p\geq 4)$ is isomorphic to one of the following non-isomorphic algebras:

$p=2k$ is even

\[\mu_1:\left\{\begin{array}{ll}
[e_i,e_1]=e_{i+1}, & 1\leq i\leq n-2k-1,\\[1mm]
[e_1, f_j] =f_{k+j}, & 1\leq j\leq k,\\[1mm]
 \end{array}\right.\]

\[\mu_2:\left\{\begin{array}{ll}
[e_i,e_1]=e_{i+1}, & 1\leq i\leq n-2k-1,\\[1mm]
[e_1,f_1]=e_{2}+f_{k+1}, &\\[1mm]
[e_i,f_1]=e_{i+1}, & 2\leq i\leq n-2k-1,\\[1mm]
[e_1, f_j] =f_{k+j}, & 2\leq j\leq k,\\[1mm]
 \end{array}\right.\]

$p=2k+1$ is odd

\[\mu_3:\left\{\begin{array}{ll}
[e_i,e_1]=e_{i+1}, & 1\leq i\leq n-2k-2,\\[1mm]
[e_1,f_j]=f_{k+1+j}, & 1\leq j\leq k,\\[1mm]
[e_i, f_{k+1}] =e_{i+1}, & 1\leq i\leq n-2k-2,\\[1mm]
 \end{array}\right.\]
where $\{e_1,e_2,\dots,e_{n-p},f_1,f_2,\dots,f_{p}\}$ is a basis
of the algebra.
\end{thm}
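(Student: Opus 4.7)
The plan is to work with the natural grading $\gr L = L_1 \oplus L_2 \oplus \cdots \oplus L_{n-p}$ and combine the $p$-filiform condition with the Leibniz identity. From $C(L) = (n-p, 1, \ldots, 1)$, pick $e_1 \in L_1 \setminus L^2$ whose right multiplication $\mathcal{R}_{e_1}$ has a Jordan block of size $n-p$; defining $e_{i+1} := [e_i, e_1]$ yields the chain $e_1, e_2, \ldots, e_{n-p}$ with $e_i \in L_i$. The remaining $p$ basis vectors $f_1, \ldots, f_p$ correspond to the size-$1$ blocks, so $[f_j, e_1] = 0$; by homogeneity each $f_j$ sits in some $L_{d_j}$, and the grading $[L_i, L_j] \subseteq L_{i+j}$ constrains all further brackets.

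First I would use the Leibniz identity to eliminate most unknowns. Applying it to the triple $(e_{i-1}, e_1, f_j)$ gives
\[
[e_i, f_j] = [[e_{i-1}, f_j], e_1] + [e_{i-1}, [e_1, f_j]],
\]
so the whole family $\{[e_i, f_j]\}_{i \geq 2}$ is determined once $[e_1, f_j]$ and $[e_2, f_j]$ are known. Analogous identities on $(e_1, f_i, f_j)$ and $(f_i, e_1, f_j)$ express the brackets among the $f$'s in terms of the chain and of each other. Combined with non-splitness (which forbids the algebra from decomposing off an $f_j$ lying outside $\mathrm{Im}\,\mathcal{R}_{e_1} + \mathrm{Im}\,\mathcal{L}_{e_1}$) and with the non-Lie hypothesis, the free parameters collapse dramatically.

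Second, I would determine the gradation of each $f_j$. Let $s$ be the number of $f_j$'s lying in $L_1$; compatibility of $\mathcal{L}_{e_1}$ with the grading pairs these $s$ additional generators with the $f_j$'s in $L_2$, since $[e_1, f_j] \in L_2$ is a combination of $e_2$ and the degree-$2$ $f$'s. A parity argument then splits the analysis. For $p = 2k$ even, the $f_j$'s pair up into $k$ generators and $k$ images; after normalizing via $e_i \to \lambda^i e_i$ and $f_j \to \mu f_j + \sum_\ell \nu_{j\ell} e_\ell$, either every $[e_1, f_j]$ becomes a pure $f_{k+j}$ (yielding $\mu_1$) or an irremovable correction $[e_1, f_1] = e_2 + f_{k+1}$ persists (yielding $\mu_2$, where the auxiliary products $[e_i, f_1] = e_{i+1}$ are then forced by the recursion above). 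For $p = 2k+1$ odd, one generator $f_{k+1}$ is unpaired and non-splitness together with the Leibniz identity force $[e_i, f_{k+1}] = e_{i+1}$, producing $\mu_3$.

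The main obstacle is the interlocking case analysis: after writing down the most general graded $p$-filiform multiplication compatible with $[f_j, e_1] = 0$, one is left with a moduli space of structure constants, and showing that every point reduces to exactly one of $\mu_1, \mu_2, \mu_3$ (with no finer split or Lie simplification hidden within) requires a careful succession of triangular base changes. Pairwise non-isomorphism is then verified via invariants such as $\dim L^2$, $\dim([L, L] \cap \Ann_r(L))$ and the rank of $\mathcal{L}_{e_1}$ restricted to the $f$-span modulo the $e$-chain, which separate $\mu_1$ from $\mu_2$; the parity of $p$ separates $\mu_3$ from the others immediately.
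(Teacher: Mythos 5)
First, note that the paper you are working from does not prove this statement at all: it is imported verbatim from the reference \cite{p-filLeibniz}, so there is no internal proof to compare against. Judged on its own terms, your sketch follows the standard strategy of that literature (adapted basis from the characteristic sequence, the recursion $[e_i,f_j]=[[e_{i-1},f_j],e_1]+[e_{i-1},[e_1,f_j]]$, triangular base changes), and several of your individual claims check out — for instance, given $[e_1,f_1]=e_2+f_{k+1}$ the recursion really does force $[e_i,f_1]=e_{i+1}$. But two steps are genuinely missing. The first is the placement of the $f_j$'s in the natural gradation: you assert that ``compatibility of $\mathcal{L}_{e_1}$ with the grading pairs'' the degree-one $f$'s with degree-two $f$'s, but you never rule out $f_j\in L_d$ for $d\geq 3$, never show that exactly $k$ (resp.\ $k+1$) of them lie in $L_1$, and never show that the products $[f_i,f_j]$ and the components of $[e_1,f_j]$ along the $e$-chain can all be normalized away except for the single obstruction defining $\mu_2$. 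This case analysis, in which non-splitness and the non-Lie hypothesis do the real work, is the entire content of the classification, and ``the free parameters collapse dramatically'' is a statement of the desired conclusion rather than an argument.

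The second gap is concrete and fatal to the non-isomorphism part: every invariant you propose to separate $\mu_1$ from $\mu_2$ takes the same value on both. Indeed, for both algebras $L^2=\Ann_r(L)=\langle e_2,\dots,e_{n-2k},f_{k+1},\dots,f_{2k}\rangle$, so $\dim L^2=\dim\big(L^2\cap \Ann_r(L)\big)=n-k-1$ in each case; and $\mathcal{L}_{e_1}$ restricted to the span of the $f$'s has rank $k$ modulo the $e$-chain in each case, since in $\mu_2$ one has $\mathcal{L}_{e_1}(f_1)=e_2+f_{k+1}\equiv f_{k+1}$. Distinguishing $\mu_1$ from $\mu_2$ requires a finer invariant (for example, whether some $2k$-dimensional complement to the $e$-chain acts trivially on $L^2$ on the right — in $\mu_2$ one has $[e_i,f_1]=e_{i+1}\neq 0$ for $i\geq 2$, and one must show no change of basis removes this) or a direct analysis of the general basis change, as is done in \cite{p-filLeibniz}. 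Only the separation of $\mu_3$ from $\mu_1,\mu_2$ via the parity of $p$ is immediate.
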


In order to simplify our further calculations for the algebra $\mu_3$, by taking the change of basis in the following form:
\[
e_1^\prime=e_1, \quad e_2^\prime=e_1-f_{k+1}, \quad e_{i+1}^\prime=e_{i}, \  2\leq i\leq n-2k-1,\quad f_{j}^\prime=f_{j}, \quad f_{k+j}^\prime=f_{k+1+j}, \ 1\leq j\leq k,
\]
we obtain the table of multiplication of the algebra $\mu_3$, which we shall use throughout the paper:
\[ \mu_3:\left\{\begin{array}{ll}
[e_1,e_1]=e_{3}, & \\[1mm]
[e_i,e_1]=e_{i+1}, & 2\leq i\leq n-2k-1,\\[1mm]
[e_1,f_j]=f_{k+j}, & 1\leq j\leq k,\\[1mm]
[e_2,f_j]=f_{k+j}, & 1\leq j\leq k.\\[1mm]
 \end{array}\right.\]
For acquaintance with the definition of cohomology group of Leibniz algebras and its applications
to the description of the variety of Leibniz algebras (similar to Lie algebras case) we refer the reader
to the papers \cite{Balavoine,Gersten,GozKhak,Loday,Loday1,Nijen}.
Here we just recall that the second cohomology group of a Leibniz algebra $L$ with coefficients in a corepresentation $M$
is the quotient space
\[HL^2(L, M) = ZL^2(L, M)/BL^2(L, M),\]
where the 2-cocycles $\varphi\in ZL^2(L, M)$ and the 2-coboundaries $f\in BL^2(L, M)$
are defined as follows

\begin{equation}\label{E.Z2}(d^2\varphi)(a,b,c)=[a,\varphi(b,c)]-[\varphi(a,b),c]+[\varphi(a,c),b]+\varphi(a,[b,c])-
\varphi([a,b],c)+\varphi([a,c],b)=0 \end{equation}
and

\begin{equation}\label{E.B2} f(a,b)=[d(a),b]+[a,d(b)]-d([a,b]) \ \mbox{for some linear map} \ d. \end{equation}

The linear reductive group $\GL_n(\mathbb{F})$ acts on $\Leib_n$, the variety of $n$-dimensional Leibniz algebra structures, via change of basis, i.e.,
\[(g*\lambda)(x,y)=g \Big(\lambda \big(g^{-1}(x),g^{-1}(y) \big) \Big), \quad  g \in \GL_n(\mathbb{F}), \  \lambda \in \Leib_n.\]

The orbits $\Orb(-)$ under this action are the isomorphism classes of algebras. Recall, Leibniz algebras with open orbits are called \emph{rigid}.
 Note that solvable Leibniz algebras of the same dimension also form an invariant subvariety of the variety of Leibniz algebras under the mentioned action.

\begin{rem} \label{rem1} Due to results of the paper [4] we have a sufficient condition for a Leibniz algebra being rigid algebra.
 Namely, if the second cohomology of a Leibniz algebra with coefficients in  itself is trivial, then it is a rigid algebra.
\end{rem}

\section{Solvable Leibniz algebras with abelian nilradical and extremal dimensions of complemented space $Q$}

We denote by ${\mathcal A(k)}$ the $k$-dimensional abelian algebras. For solvable Leibniz algebras with nilradical ${\mathcal A(k)}$
and dimension of complemented space of nilradical to an algebra is equal to $s$, we shall use
the  notation $R({\mathcal A(k)},s)$.

This section is devoted to the classification of solvable Leibniz algebras with nilradical ${\mathcal A(k)}$ under the condition that the complemented space to  the nilradical have extremal dimensions.
 The extremal dimensions of the complemented space means the minimal and maximal possible dimensions of the space. Evidently, the candidate for minimal dimension of complemented space is equal one.

Firstly, we consider the case of solvable algebras with one-dimensional complemented space of ${\mathcal A(k)}$.

Let $\{f_1,f_2,\dots,f_k,x\}$ be a basis of the algebra $R({\mathcal A(k)}, 1)$.

Evidently, the space of derivations of the algebra ${\mathcal A(k)}$ coincided with the space of $k \times k$ matrices.

Let us assume that the operator $\mathcal{R}_{{x |}_{{\mathcal A(k)}}}$ has Jordan block form, that is, $\mathcal{R}_{{x |}_{{\mathcal A(k)}}}=J_\lambda$.

\begin{thm} \label{thmQ=1} An arbitrary algebra of the family $R({\mathcal A(k)}, 1)$ is isomorphic to one of the following non-isomorphic algebras:
\[R_1: \quad \left\{\begin{array}{ll}
[f_i,x]=f_i+f_{i+1},&1\leq i\leq k-1,\\[1mm]
[f_k,x]=f_k,&\\[1mm]
\end{array}\right.
R_2: \quad \left\{\begin{array}{ll}
[f_i,x]=f_i+f_{i+1},&1\leq i\leq k-1,\\[1mm]
[f_k,x]=f_k,&\\[1mm]
[x,f_i]=-f_i-f_{i+1},&1\leq i\leq k-1,\\[1mm]
[x,f_k]=-f_k.&\\[1mm]
\end{array}\right.\]
\end{thm}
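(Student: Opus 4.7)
The plan is to exploit the hypothesis $\mathcal{R}_{x|_N}=J_\lambda$ in three stages: first normalize the right action, then pin down the left action $\mathcal{L}_x$ via the Leibniz identity, and finally absorb the residual unknown $[x,x]$ by a change of basis. For normalization, the preliminaries force $\lambda\neq 0$, so after the rescaling $x\mapsto \lambda^{-1}x$ and $f_i\mapsto \lambda^{1-i}f_i$ the products $[f_i,x]$ take exactly the form listed in $R_1$ and $R_2$: $[f_i,x]=f_i+f_{i+1}$ for $1\leq i\leq k-1$ and $[f_k,x]=f_k$.

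Next I would treat the remaining unknowns, namely $\mathcal{L}_x\in\mathrm{End}(N)$ and $[x,x]\in N$ (the containment holds because $R/N$ is one-dimensional and any one-dimensional Leibniz algebra is abelian). Applying the Leibniz identity to the triples $(x,f_i,x)$ and $(x,x,f_i)$, and using that $N$ is abelian so any bracket of two elements of $N$ vanishes, I obtain the two relations $\mathcal{R}_x\mathcal{L}_x=\mathcal{L}_x\mathcal{R}_x$ and $(\mathcal{L}_x+\mathcal{R}_x)\mathcal{L}_x=0$ on $N$. Since $\mathcal{R}_x=I+N_0$ is a single Jordan block (with $N_0$ its standard nilpotent part), its commutant in $\mathrm{End}(N)$ is the local ring $\mathbb{F}[N_0]$, in which an element is invertible exactly when its constant term is nonzero. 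Writing $\mathcal{L}_x=\sum_{j=0}^{k-1}a_jN_0^j$ and splitting on the value of $a_0$, I find: if $a_0+1\neq 0$, then $\mathcal{L}_x+\mathcal{R}_x$ is invertible and the relation forces $\mathcal{L}_x=0$; if $a_0=-1$, then $\mathcal{L}_x$ itself is invertible and the relation forces $\mathcal{L}_x=-\mathcal{R}_x$.

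It remains to dispose of $[x,x]$ in each branch. In the branch $\mathcal{L}_x=0$, replacing $x$ by $x+n$ for $n\in N$ preserves all brackets $[f_i,x]$ and $[x,f_i]$ while changing $[x,x]$ to $[x,x]+\mathcal{R}_x(n)$; invertibility of $\mathcal{R}_x$ lets me annihilate $[x,x]$, producing $R_1$. In the branch $\mathcal{L}_x=-\mathcal{R}_x$, the Leibniz identity applied to $(x,x,x)$ collapses to $\mathcal{L}_x([x,x])=0$, hence $\mathcal{R}_x([x,x])=0$ and so $[x,x]=0$ automatically, producing $R_2$. Non-isomorphism of $R_1$ and $R_2$ then follows from the dichotomy $\dim\Ann_r(R_1)=k$ versus $\dim\Ann_r(R_2)=0$, the latter because $\mathcal{R}_x=-\mathcal{L}_x$ acts invertibly on $N$. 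The main obstacle I anticipate is the commutant step of the middle paragraph: one must correctly extract the dichotomy from $(\mathcal{L}_x+\mathcal{R}_x)\mathcal{L}_x=0$ via the invertibility criterion in $\mathbb{F}[N_0]$, after which the remaining steps reduce to structural bookkeeping.
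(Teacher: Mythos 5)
Your proposal is correct and follows essentially the same route as the paper: normalize $\lambda=1$, use the Leibniz identity on $(x,f_i,x)$ and $(x,x,f_i)$ to constrain $\mathcal{L}_x$, split into the two cases $a_0=0$ and $a_0=-1$, and then remove $[x,x]$ by translating $x$ (resp.\ by ${\mathcal L}(x,x,x)=0$). Your commutant/local-ring formulation $(\mathcal{L}_x+\mathcal{R}_x)\mathcal{L}_x=0$ in $\mathbb{C}[N_0]\cong\mathbb{C}[t]/(t^k)$ is just a cleaner packaging of the paper's explicit coefficient recursion \eqref{eq1}, and your right-annihilator argument supplies the non-isomorphism of $R_1$ and $R_2$, which the paper leaves implicit.
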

\begin{proof} Since $\lambda=0$ implies the nilpotency of the algebra $R$ we get $\lambda\neq0$.
Moreover, by scaling the basis elements
\[x^\prime=\lambda^{-1}x, \quad f_i^\prime=\lambda^{1-i}f_i, \ 1\leq i\leq k,\]
we can suppose $\lambda=1$.

Therefore, the  multiplication table of the algebra $R$ has the following form:
\[\left\{\begin{array}{ll}
[f_i,x]=f_i+f_{i+1},&1\leq i\leq k-1,\\[1mm]
[f_k,x]=f_k,&\\[1mm]
[x,f_i]=\sum\limits_{j=1}^{k}\alpha_{i,j}f_j,&1\leq i\leq k-1,\\[1mm]
[x,x]=\sum\limits_{i=1}^{k}\theta_{i}f_i.&\\[1mm]
\end{array}\right.\]

From the equalities ${\mathcal L}(x, f_i, x)={\mathcal L}(x, x, f_1)=0$ with $1\leq i\leq k$, derive the restrictions:
\begin{equation}\label{eq1}
\left\{\begin{array}{ll}
\alpha_{i,j}=0, & 1\leq j < i \leq k,\\[1mm]
\alpha_{i,j}=\alpha_{1,j-i+1},&1\leq i\leq j\leq k,\\[1mm]
\alpha_{1,1}^2=-\alpha_{1,1},&\\[1mm]
\sum\limits_{i=1}^{j}\alpha_{1,i}\alpha_{1,j-i+1}=-\alpha_{1,j-1}-\alpha_{1,j},& 2\leq j\leq k.\\[1mm]
\end{array}\right.
\end{equation}

Consider the possible cases.

{\bf Case 1.} Let $\alpha_{1,1}=0$. Then from the fourth equation of restrictions \eqref{eq1} we get $\alpha_{1,i}=0,\ \ 2\leq i\leq k$.

By taking the change in the following form:
\[ x^\prime=x+\sum\limits_{i=1}^{k}\sum\limits_{j=1}^{i}(-1)^{i+j-1}\theta_{j}f_{i}\]
we obtain the algebra $R_1$.

{\bf Case 2.} Let $\alpha_{1,1}=-1$. Then from restrictions \eqref{eq1} we obtain $\alpha_{1,2}=-1$ and $\alpha_{1,i}=0, \ 3\leq i\leq k$.
 The equality ${\mathcal L}(x, x, x)=0$ implies $\theta_{i}=0, \ 1\leq i\leq k$. Thus, we obtain the algebra $R_2$
\end{proof}

Now we shall classify the opposite case to one-dimensional complemented space $Q$, that is, we consider the maximal dimension of $Q$.

\begin{thm} The maximal possible dimension of algebras of the family $R({\mathcal A(k)}, s)$ is equal to $2k$, that is, $s=k$.
Moreover, an arbitrary algebra of the family $R({\mathcal A(k)}, k)$ is decomposed into a direct sum of copies of two-dimensional non-trivial solvable Leibniz algebras.
\end{thm}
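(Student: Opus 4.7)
The plan splits naturally into two parts. First, for the dimension bound $s\leq k$, the preliminaries give $s\leq\mu$ where $\mu$ is the maximal number of nil-independent derivations of $\mathcal{A}(k)$. Since $\Der(\mathcal{A}(k))=\mathfrak{gl}_k$, I would show that any subspace $V\subseteq\mathfrak{gl}_k$ with $\dim V\geq k+1$ contains a nonzero nilpotent matrix; this follows from a projective dimension count, since the nilpotent cone of $\mathfrak{gl}_k$ is cut out by the $k$ nontrivial coefficients of the characteristic polynomial, so it must meet any projective subspace of dimension $\geq k$. The bound is sharp (the diagonal subspace realizes it), giving $s\leq k$ and hence $\dim R\leq 2k$.

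Assume now $s=k$. The first structural step is to normalize a basis $\{x_1,\dots,x_k\}$ of $Q$ so that $\mathcal{R}_{x_i|N}=E_{ii}$. The span $\mathfrak{t}=\langle\mathcal{R}_{x_1|N},\dots,\mathcal{R}_{x_k|N}\rangle\subset\mathfrak{gl}_k$ is a $k$-dimensional nil-independent subspace, and combining the Jordan decomposition with nil-independence forces the semisimple parts $S_i$ to be linearly independent; after conjugating by $\GL_k$ to put these in standard diagonal form and absorbing nilpotent components by a linear change of the $x_i$, one obtains $\mathcal{R}_{x_i|N}=E_{ii}$, so that $[f_j,x_i]=\delta_{ij}f_i$. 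With this normalization the Leibniz identities pin down the remaining products. Writing $[x_i,f_j]=\sum_m\alpha_{ij}^m f_m$ and $[x_i,x_l]=\sum_m\gamma_{il}^m f_m+(Q\text{-part})$, the identity $\mathcal{L}(f_j,x_i,x_l)=0$ combined with nil-independence forces $[x_i,x_l]\in N$; next $\mathcal{L}(x_i,f_j,x_l)=0$ shows $[x_i,f_j]=\beta_{ij}f_j$ (concentrated in the $j$-th coordinate), while $\mathcal{L}(x_i,x_l,f_j)=0$ gives $\beta_{ij}=0$ for $i\neq j$ and $\beta_{jj}(\beta_{jj}+1)=0$, so $\beta_{jj}\in\{0,-1\}$. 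Case analysis on $\mathcal{L}(x_i,x_l,x_p)=0$ then restricts the support of $[x_i,x_l]$ to $\{f_i,f_l\}$ and yields compatibility relations among $\gamma_{il}^i,\gamma_{il}^l,\gamma_{li}^i,\gamma_{li}^l$ depending on which of $\beta_{ii},\beta_{ll}$ vanish.

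The main obstacle and final step is the simultaneous elimination of these residual brackets. The shift $x_i\mapsto x_i+\sum_m c_{im}f_m$ preserves $[f_j,x_i]$ and $[x_i,f_j]$, and transforms $[x_i,x_l]$ by adding $c_{li}\beta_{ii}f_i+c_{il}f_l$. Since only $c_{il},c_{li}$ affect the pair $\{[x_i,x_l],[x_l,x_i]\}$, and these decouple from the other index pairs and from the diagonal shifts $c_{ii}$, I can proceed pair by pair: in each of the four cases $(\beta_{ii},\beta_{ll})\in\{0,-1\}^2$ the two parameters $c_{il},c_{li}$ suffice (thanks to the compatibility relations) to annihilate both $[x_i,x_l]$ and $[x_l,x_i]$, while $c_{ii}$ kills $[x_i,x_i]$ (automatic when $\beta_{ii}=-1$ from $\beta_{ii}\gamma_{ii}^i=0$, and via $c_{ii}=-\gamma_{ii}^i$ when $\beta_{ii}=0$). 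Once all $Q$-$Q$ brackets vanish, $R=\bigoplus_{i=1}^{k}\langle f_i,x_i\rangle$, each summand being one of the two nonabelian two-dimensional solvable Leibniz algebras corresponding to $\beta_{ii}\in\{0,-1\}$ (the non-Lie one and the Lie one), which is exactly the asserted decomposition.
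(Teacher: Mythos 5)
Your argument for the bound $s\le k$ is correct and genuinely different from the paper's: the paper never invokes a projective intersection count, but instead first proves that the operators $\mathcal{R}_{{x_i|}_{\mathcal A(k)}}$ commute pairwise (from $0=[f,[x_j,x_t]]=[[f,x_j],x_t]-[[f,x_t],x_j]$, using that $[x_j,x_t]$ lies in the abelian nilradical), simultaneously reduces them to Jordan form, and observes that the eigenvalue vectors in $\mathbb{C}^k$ must be linearly independent. Your ``nilpotent cone has codimension at most $k$, so it meets any projective subspace of dimension $\ge k$'' argument is clean, does not need commutativity, and gives the sharp bound directly; that half of your plan is fine.

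The gap is in the normalization $\mathcal{R}_{{x_i|}_N}=E_{ii}$. First, you never establish that the operators $\mathcal{R}_{{x_i|}_N}$ commute; without this, the semisimple parts $S_i$ need not commute, need not be simultaneously diagonalizable, and even the inference ``nil-independence forces the $S_i$ to be linearly independent'' breaks down, since the Jordan decomposition of $\sum c_i\mathcal{R}_{x_i}$ equals $\sum c_iS_i+\sum c_iN_i$ only when all the summands commute. Commutativity does hold here --- it is exactly the Leibniz computation above --- but it is the key structural input and must be stated. Second, ``absorbing nilpotent components by a linear change of the $x_i$'' is not a valid operation: replacing the $x_i$ by linear combinations replaces the operators by the corresponding linear combinations and does not kill their nilpotent parts, while adding elements of $N$ to the $x_i$ changes nothing because $N$ is abelian. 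You must actually prove $N_i=0$. The paper does this via the identities $\mathcal{L}(f_i,x_i,x_j)=\mathcal{L}(f_i,x_i,x_{i+1})=0$ after the simultaneous Jordan reduction; alternatively, once commutativity is in hand, each $N_j$ commutes with all the $S_i$, which span the full diagonal subalgebra, so $N_j$ is diagonal and hence zero. The rest of your plan (forcing $[x_i,x_l]\in N$, then $[x_i,f_j]=\delta_{ij}\beta_{jj}f_j$ with $\beta_{jj}\in\{0,-1\}$, and the pairwise elimination of the $Q$--$Q$ brackets using the compatibility relations $\beta_{ii}\gamma_{li}^i=\gamma_{il}^i$) reproduces the paper's computation and is correct, including the final identification of each summand $\langle f_i,x_i\rangle$ with $l_2$ or $r_2$.
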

\begin{proof} Let $\{x_1, x_2, \dots, x_s\}$ be a basis of the complemented space of ${\mathcal A(k)}$ to the algebra $R({\mathcal A(k)}, s)$.

Then from Leibniz identity
\[0=[f_i,[x_j,x_t]]=[[f_i,x_j],x_t]-[[f_i,x_t],x_j]\] we conclude that the operators $\mathcal{R}_{{x_i |}_{{\mathcal A(k)}}}, \ 1\leq i \leq s$, commute pairwise, that is,
$\mathcal{R}_{{x_j |}_{{\mathcal A(k)}}}\circ\mathcal{R}_{{x_t |}_{{\mathcal A(k)}}}=\mathcal{R}_{{x_t |}_{{\mathcal A(k)}}}\circ\mathcal{R}_{{x_j |}_{{\mathcal A(k)}}}$ for any $1\leq j, t\leq s$.
 This implies that all operators $\mathcal{R}_{{x_i |}_{{\mathcal A(k)}}}, \ 1\leq i \leq s$ could be simultaneously transformed to their Jordan forms by a basis transformation.

Let $\{\lambda_{1}^{(i)},\lambda_{2}^{(i)},\dots,\lambda_{k}^{(i)}\}$ be the eigenvalues of the operators
corresponding to $\mathcal{R}_{{x_i |}_{{\mathcal A(k)}}}, \ 1\leq i \leq s$.

Consider the vectors $\alpha_i=(\lambda_{1}^{(i)},\lambda_{2}^{(i)},\dots,\lambda_{k}^{(i)}), \ 1 \leq i\leq s$, of the $k$-dimensional vector space $\mathbb{C}^k$.
 Since $\Der({\mathcal A(k)})\cong M_{k,k}$ and nil-independent derivations are $\mathcal{R}_{{x_i |}_{{\mathcal A(k)}}}, \ 1\leq i \leq s$.
  We deduce that the maximal number of nil-independent among vectors $\alpha_i, \ 1\leq i \leq s$ is equal to $k$. This means that the maximal dimension of the complemented space is equal to $k$, i.e. $s=k$.

Without loss of generality we can assume that $\alpha_i=(0, \dots, 0, \underbrace{\lambda_i}_{i\text{-th place}}, 0, \dots,0)$,  \ $ \lambda_i\neq 0, \ 1 \leq i\leq k$, correspond to
$\mathcal{R}_{{x_i |}_{{\mathcal A(k)}}}, \ 1\leq i \leq k$.

Thus, we obtain the products in the algebras $R({\mathcal A(k)}, k)$:
\[\left\{\begin{array}{ll}
[f_i,x_j]=a_{i,j}f_{i+1},&1\leq i\leq k-1,\ 1\leq j\leq k,\ i\neq j,\\[1mm]
[f_i,x_i]=\lambda_{i}f_i+a_{i,i}f_{i+1},&1\leq i\leq k-1,\\[1mm]
[f_k,x_k]=\lambda_{k}f_k,&\\[1mm]
[f_{k},x_j]=0,& 1\leq j\leq k-1.\\[1mm]
\end{array}\right.\]

By scaling basis elements $x_i^\prime=\frac{1}{\lambda_i}, \ 1\leq i\leq k$, we obtain $\lambda_i=1$.

The equalities \[{\mathcal L}(f_i, x_i, x_j)={\mathcal L}(f_i, x_i, x_{i+1})=0\]
derive $a_{i,j}=0, \  1\leq i\leq k-1, \  1\leq j\leq k$.

Let us introduce the notations
\[[x_i,f_j]=\sum\limits_{t=1}^{k}\alpha_{i,j}^tf_t, \quad [x_i,x_j]=\sum\limits_{t=1}^{k}\gamma_{i,j}^tf_t, \quad 1\leq i,j\leq k.\]

From ${\mathcal L}(x_i, f_j, x_j)=0$ we have $[x_i,f_j]=\alpha_{i,j}f_j$.

Moreover, the equalities ${\mathcal L}(x_i, x_j, f_j)={\mathcal L}(x_i, x_j, f_i)=0$ imply the restrictions:
\[ \left\{\begin{array}{ll}
\alpha_{i,j}=0,&1\leq i,j\leq k, \ \ i\neq j,\\[1mm]
\alpha_{i,i}^2=-\alpha_{i,i},&1\leq i\leq k.\\[1mm]
\end{array}\right.\]


Consider the chain of equalities
\[ [x_i,[x_j,x_i]]=[[x_i,x_j],x_i]-[[x_i,x_i],x_j]=\gamma_{i,j}^if_i-\gamma_{i,i}^jf_j.\]

On the other hand, we have
\[ [x_i,[x_j,x_i]]=\Big[x_i,\sum\limits_{t=1}^{k}\gamma_{j,i}^tf_t\Big]=\alpha_{i,i}\gamma_{j,i}^if_i.\]

By comparing the coefficients at the basis elements we obtain
\begin{equation}\label{eq2}
\left\{\begin{array}{ll}
\alpha_{i,i}\gamma_{j,i}^i=\gamma_{i,j}^i,&1\leq i,j\leq k, \ \ i\neq j,\\[1mm]
\alpha_{i,i}\gamma_{i,i}^i=0,&1\leq i\leq k, \ \ i= j,\\[1mm]
\gamma_{i,i}^j=0,&1\leq i,j\leq k, \ \ i\neq j.\\[1mm]
\end{array}\right.
\end{equation}

We can assume that $\gamma_{i,i}^i=0$ for $1\leq i\leq k$. Indeed, if $\alpha_{i,i}\neq0$ for some $i$, then from the above restrictions we have $\gamma_{i,i}^i=0$.
 For those $i$ such that $\alpha_{i,i}=0$ by taking the change $x_i^\prime=x_i-\gamma_{i,i}^if_{i}$,  we again obtain $\gamma_{i,i}^i=0$.
  Therefore, $\gamma_{i,i}^i=0, \ 1\leq i\leq k$, that is, $[x_i,x_i]=0, \ 1\leq i\leq k$.

Consider the chain of equalities
\[ [x_i,[x_j,x_t]]=[[x_i,x_j],x_t]-[[x_i,x_t],x_j]=\gamma_{i,j}^tf_t-\gamma_{i,t}^jf_j.\]

On the other hand, we have
\[ [x_i,[x_j,x_t]]=\Big[x_i,\sum\limits_{l=1}^{k}\gamma_{j,t}^lf_l\Big]=\alpha_{i,i}\gamma_{j,t}^if_i.\]

From these we obtain \[ \gamma_{i,j}^t=0, \ 1\leq i, j, t \leq k, \ \ i\neq j, \ \ i\neq t, \ \ j\neq t.\]

By taking the change of basis element:
\[x_i^\prime=x_i-\sum\limits_{t=1,\ \ t\neq i}^{k}\gamma_{i,t}^tf_{t}, \ \ \ 1\leq i\leq k,\]
and by taking into account restrictions \eqref{eq2} we can conclude that $[x_i,x_j]=0,  \ \ 1\leq i,j\leq k$.

Thus, we have the  multiplication  table of the family of algebras $R({\mathcal A(k)}, k)$:
\[ \left\{\begin{array}{ll}
[f_i,x_i]=f_{i},&1\leq i\leq k,\\[1mm]
[x_i,f_i]=\alpha_{i,i}f_i,&1\leq i\leq k,\\[1mm]
\end{array}\right.\]
where $\alpha_{i,i}^2=-\alpha_{i,i}, \ 1\leq i\leq k$.
\end{proof}

\begin{rem} The number of non-isomorphic algebras in the family $R({\mathcal A(k)}, k)$ is equal to $k+1$.
\end{rem}
It should be noted that in the work \cite{Abror} the  algebras $R({\mathcal A(2)}, 2)$  were already classified.

Let $l_2: [e,x] = e$ and  $r_2: [e,x] = e, [x,e] = -e$ be two-dimensional non-Lie Leibniz and Lie algebras.

Consider the algebra ${\mathcal L_t}=l_2\oplus l_2\oplus \dots \oplus l_2 \oplus r_2 \oplus r_2  \oplus \dots  \oplus r_2$, where $t \ (0\leq t \leq k)$ is the number of entries of $l_2$ in ${\mathcal L_t}$. Then there exists a basis
$\{e_1, e_2, \dots, e_t, x_1, x_2, \dots, x_{t}, y_1, y_2, \dots, y_{k-t}, f_1, f_2, \dots f_{k-t}\}$  of ${\mathcal L_t}$ such that the  multiplication table has the form:
\[ [e_i,x_i] = e_i, \ \ 1 \leq i \leq t, \qquad [f_j,y_j] = -[y_j,f_j]=f_j, \ \ 1\leq j \leq k-t.\]

Let us present the general form of a derivation of the algebra ${\mathcal L_t}$.
\begin{prop}  Any derivation $d$ of the algebra from ${\mathcal L_t}$ has the following form:
\[ d(e_i)=a_ie_i, \quad 1\leq i\leq t, \qquad d(f_j)=b_jf_j, \qquad d(y_j)=c_jf_j, \quad 1\leq j\leq k-t.\]
\end{prop}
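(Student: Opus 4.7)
The plan is to take an arbitrary linear map $d$ on $\mathcal{L}_t$, write each of $d(e_i)$, $d(x_i)$, $d(f_j)$, $d(y_j)$ as an unknown linear combination of the full basis $\{e_p, x_p, f_q, y_q\}$, and then apply the derivation identity $d([u,v])=[d(u),v]+[u,d(v)]$ to every pair of basis vectors. The key structural fact to exploit is that $\mathcal{L}_t$ is a direct sum of small algebras with very few nonzero brackets: only $[e_i, x_i] = e_i$ and $[f_j, y_j] = -[y_j, f_j] = f_j$ are nonzero. This dichotomy splits the resulting linear equations into two types: the nontrivial brackets pin down the ``diagonal'' coefficients of $d$, while the zero brackets kill all the off-diagonal ones.

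First, I would apply the identity to $[e_i, x_i] = e_i$. The right-hand side $[d(e_i), x_i] + [e_i, d(x_i)]$ lies in the one-dimensional span of $e_i$, because the only nonzero brackets of the form $[\,\cdot\,, x_i]$ and $[e_i, \,\cdot\,]$ both produce multiples of $e_i$. Matching coefficients with $d(e_i)$ forces every component of $d(e_i)$ other than the $e_i$-component to vanish, yielding $d(e_i) = a_i e_i$. The same argument applied to $[f_j, y_j] = f_j$ gives $d(f_j) = b_j f_j$; the complementary relation $[y_j, f_j] = -f_j$ then produces no extra information once $d(f_j)$ is known, it merely serves as a consistency check.

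To pin down $d(y_j) = c_j f_j$, I would then systematically use the zero brackets. The identity $[e_i, y_j] = 0$ kills the $x_i$-components of $d(y_j)$; $[y_j, x_i] = 0$ kills the $e_i$-components; $[y_j, f_i] = 0$ for $i \neq j$ kills the $y_i$-components; and applying the derivation identity to $[y_j, f_j] = -f_j$, now using $d(f_j) = b_j f_j$, kills the $y_j$-component. The off-diagonal $f$-components of $d(y_j)$ are the last to go: from $[y_j, y_k] = 0$ with $k \neq j$ one obtains a relation of the form $\alpha f_k - \beta f_j = 0$ whose terms are linearly independent, forcing both coefficients to vanish. What remains is exactly $d(y_j) = c_j f_j$.

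The calculation is entirely mechanical; the only obstacle is the bookkeeping involved in tracking four families of basis vectors and systematically traversing every pair. No conceptual difficulty is expected, and the proposition should fall out by direct linear algebra.
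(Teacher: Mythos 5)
Your proposal is correct and is precisely the ``straightforward verification of the derivation property'' that the paper's one-line proof refers to; each of your individual deductions (the nonzero brackets $[e_i,x_i]$ and $[f_j,y_j]$ pinning down $d(e_i)$ and $d(f_j)$, and the zero brackets $[e_i,y_j]$, $[y_j,x_i]$, $[y_j,f_i]$, $[y_j,y_k]$ eliminating the remaining components of $d(y_j)$) checks out. One minor remark: the proposition, and the subsequent count $\dim \Der({\mathcal L_t})=2k-t$, implicitly also assert $d(x_i)=0$, which your method yields by the same bookkeeping (the identities for $[e_p,x_i]$, $[f_q,x_i]$, $[x_i,x_p]$ and $[x_i,y_q]$ kill every component of $d(x_i)$), so that case should be recorded for completeness.
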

\begin{proof} The proof is carrying out by straightforward verification of derivation property \eqref{eq0}.
\end{proof}
Now we could to easily calculate the dimensions of the spaces $\Der({\mathcal L_t})$.
\begin{cor} \label{cor1}
\[ \dim \Der({\mathcal L_t}) = 2k-t, \qquad \dim BL^2({\mathcal L_t},{\mathcal L_t})=4k^2-2k+t.\]
\end{cor}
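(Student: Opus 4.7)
The plan is to combine the explicit description of $\Der(\mathcal{L}_t)$ given by the previous proposition with the standard fact that the kernel of the coboundary map $\partial\colon\Hom(\mathcal{L}_t,\mathcal{L}_t)\to\Hom(\mathcal{L}_t\otimes\mathcal{L}_t,\mathcal{L}_t)$ sending $d$ to $f(a,b)=[d(a),b]+[a,d(b)]-d([a,b])$ is precisely $\Der(\mathcal{L}_t)$, while its image is by definition $BL^2(\mathcal{L}_t,\mathcal{L}_t)$.

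For the first equality, I would read the previous proposition as a complete parametrization: a derivation $d$ of $\mathcal{L}_t$ is freely determined by the $t+2(k-t)$ scalars $a_1,\dots,a_t$, $b_1,\dots,b_{k-t}$ and $c_1,\dots,c_{k-t}$, with $d(x_i)=0$ and $d(y_j)$ having no component outside $f_j$ (no additional free parameters appearing). Since each choice of these scalars yields a well-defined derivation and distinct choices yield distinct derivations, one obtains
\[\dim\Der(\mathcal{L}_t)=t+(k-t)+(k-t)=2k-t.\]

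For the second equality, I would apply the short exact sequence
\[0\longrightarrow\Der(\mathcal{L}_t)\longrightarrow\Hom(\mathcal{L}_t,\mathcal{L}_t)\xrightarrow{\ \partial\ }BL^2(\mathcal{L}_t,\mathcal{L}_t)\longrightarrow 0,\]
whose exactness at $\Hom(\mathcal{L}_t,\mathcal{L}_t)$ is exactly the derivation condition \eqref{eq0} and whose surjectivity onto $BL^2(\mathcal{L}_t,\mathcal{L}_t)$ is the definition of $BL^2$ through \eqref{E.B2}. The rank-nullity theorem, combined with $\dim\Hom(\mathcal{L}_t,\mathcal{L}_t)=(2k)^2=4k^2$ and the first equality, then yields
\[\dim BL^2(\mathcal{L}_t,\mathcal{L}_t)=4k^2-(2k-t)=4k^2-2k+t.\]

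No essential obstacle arises: once the previous proposition is in hand, both equalities reduce to elementary linear algebra. The only point worth brief verification is that \eqref{E.B2} indeed defines $\partial$ as a well-defined linear map whose kernel coincides with $\Der(\mathcal{L}_t)$, which is immediate from \eqref{eq0}.
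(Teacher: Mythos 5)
Your proof is correct and follows exactly the argument the paper intends (the paper states the corollary without written proof, as an immediate consequence of the preceding proposition): counting the free parameters $a_i, b_j, c_j$ gives $t+2(k-t)=2k-t$, and rank--nullity applied to the coboundary map $\Hom(\mathcal{L}_t,\mathcal{L}_t)\to BL^2(\mathcal{L}_t,\mathcal{L}_t)$ of \eqref{E.B2}, whose kernel is $\Der(\mathcal{L}_t)$ by \eqref{eq0} and where $\dim\mathcal{L}_t=2k$, gives $4k^2-2k+t$. No issues.
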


In order to prove the triviality of the second group of cohomology for the algebra ${\mathcal L_t}$ with coefficients in itself we need to describe the space of 2-cocycles.

\begin{prop} \label{prop111} Any element of $\varphi\in ZL^2({\mathcal L_t},{\mathcal L_t})$ has the following form:
\[\begin{array}{ll}
\varphi(x_i, x_j) = \alpha_{i,j} e_j, &  1\leq i,j\leq t,\\[1mm]
\varphi(e_i, x_i)
=\sum\limits_{m=1}^{t}\beta_{i,m}^1x_m+\sum\limits_{m=1}^{t}\beta_{i,m}^2e_m
+\sum\limits_{m=1}^{k-t}\beta_{i,m}^3y_m
+\sum\limits_{m=1}^{k-t}\beta_{i,m}^4f_m,&1 \leq i \leq t, ,\\[1mm]
\varphi(e_j, x_i) = -\beta_{i,j}^2e_i + \nu_{j,i}e_{j}, & 1\leq
i,j\leq t,\ i \neq j,\\[1mm]
\varphi(y_j, x_i)=\gamma_{j,i}^1e_i + \gamma_{j,i}^2f_j,&  1
\leq i \leq t, \ 1 \leq j \leq k-t,\\[1mm]
\varphi(y_j, y_i)=-\varphi(y_i, y_j) = \delta_{j,i}^1f_i +
\delta_{j,i}^2f_j,&  1
\leq i < j  \leq k-t, \\[1mm]
\varphi(f_i, y_i)
=\sum\limits_{m=1}^{t}\xi_{i,m}^1x_m+\sum\limits_{m=1}^{t}\xi_{i,m}^2e_m+\sum\limits_{m=1}^{k-t}
\xi_{i,m}^3y_m+\sum\limits_{m=1}^{k-t}\xi_{i,m}^4f_m ,&1 \leq i
\leq k-t,\\[1mm]
\varphi(f_j, y_i) =-\varphi(y_i, f_j) =-\xi_{i,j}^4f_i +
\eta_{j,i}f_{j},&
1 \leq i,j\leq k-t,\ i\neq j,\\[1mm]
\varphi(f_j, x_i) = -\xi_{i,j}^2e_i + \theta_{j,i}f_{j},& 1\leq i
\leq t, \ 1 \leq j \leq k-t,\\[1mm]
\varphi(e_i, y_j) = -\beta_{i,j}^4f_j + \tau_{j,i}e_{j},&  1 \leq
i \leq t, \ 1 \leq j \leq k-t,\\[1mm]
\varphi(x_i, y_j) = -\gamma_{i,j}^2f_j,&  1 \leq i \leq t, \ 1
\leq j \leq k-t,\\[1mm]
\varphi(e_i, e_j) = -\beta_{j,i}^1e_i , & 1 \leq i, j \leq
t,\\[1mm]
\varphi(f_j, e_i) = -\beta_{i,j}^3 f_j , & 1 \leq i \leq t, \ 1
\leq j \leq k-t,\\[1mm]
\varphi(y_j, e_i) = \beta_{j,i}^4 f_j ,  & 1 \leq i \leq t, \ 1
\leq j \leq k-t,\\[1mm]
\varphi(f_i, f_j) =-\varphi(f_j, f_i)= - \xi_{i,j}^3 f_i +
\xi_{j,i}^3 f_j,&  1 \leq
j<i \leq k-t,\\[1mm]
\varphi(e_i, f_j) = -\xi_{j,i}^1 e_i+\beta_{i,j}^3 f_j,&  1 \leq i
\leq t, \ 1 \leq j \leq k-t,\\[1mm]
\varphi(x_i, f_j) = - \theta_{j,i} f_j,&  1 \leq i \leq t, \ 1
\leq j \leq k-t.\end{array} \]
\end{prop}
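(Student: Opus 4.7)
The plan is to solve the linear system arising from the 2-cocycle condition \eqref{E.Z2} applied to every triple $(a,b,c)$ of basis vectors of $\mathcal{L}_t$. I would begin by writing $\varphi(u,v)$ as a generic linear combination of basis vectors with unknown scalar coefficients for each pair of basis vectors $u,v$, and then, for each choice of $(a,b,c)$, collect the coefficients of each basis vector appearing in $(d^2\varphi)(a,b,c)$ and equate them to zero. The displayed answer in the proposition is the resulting general solution.

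The computation is tractable because the multiplication in $\mathcal{L}_t$ is extremely sparse: the only nonzero products are $[e_i,x_i]=e_i$ and $[f_j,y_j]=-[y_j,f_j]=f_j$. Consequently, in \eqref{E.Z2} most of the three $\varphi$-terms involving an inner bracket vanish, and the remaining three terms $[a,\varphi(b,c)]$, $[\varphi(a,b),c]$, $[\varphi(a,c),b]$ simplify dramatically once $\varphi(u,v)$ is projected onto the individual $l_2$ and $r_2$ summands. Since every basis vector is an eigenvector (with eigenvalue $0$ or $1$) for the right and left multiplications, these terms essentially read off certain components of $\varphi(u,v)$.

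I would then organize the triples $(a,b,c)$ by summand type: (i) all three from a single $l_2$ summand; (ii) all three from a single $r_2$ summand; (iii) triples drawn from two distinct $l_2$ summands; (iv) triples from two distinct $r_2$ summands; (v) triples mixing an $l_2$ summand with an $r_2$ summand. Within each type the computations are essentially the same after relabeling, so only a handful of representative cases need to be worked out. Cases (i) and (ii) determine the generic form of $\varphi$ on the diagonal pairs $(e_i,x_i)$ and $(f_i,y_i)$, producing the free parameters $\beta_{i,m}^{r}$ and $\xi_{i,m}^{r}$ of the statement. The cross cases then force the vanishing of most off-diagonal coefficients and establish the listed consistency relations, such as $\varphi(e_j,x_i)=-\beta_{i,j}^{2}e_i+\nu_{j,i}e_j$, which tie the surviving off-diagonal parameters to the diagonal ones.

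The main obstacle is bookkeeping rather than conceptual depth: with $4k$ basis vectors there are on the order of $(4k)^3$ triples and up to $16k^4$ initial unknowns. The key efficiency is to process the cases in a fixed order, so that once a relation identifies a new coefficient with an already-named parameter, the expression from the earlier relation is propagated rather than a fresh symbol introduced. Slightly more care is required for the non-Lie summand $l_2$ (where $[x_i,e_i]=0$ while $[e_i,x_i]=e_i$), which is the source of the non-antisymmetric cross relations such as $\varphi(e_i,e_j)=-\beta_{j,i}^{1}e_i$ with no matching term in $\varphi(e_j,e_i)$, in contrast to the antisymmetric pairs $\varphi(y_j,y_i)$ and $\varphi(f_i,f_j)$ coming from the Lie summand $r_2$.
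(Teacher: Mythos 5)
Your proposal is correct and coincides with the paper's own (one-line) proof, which likewise just verifies the cocycle identity \eqref{E.Z2} on all triples of basis elements; your organization by $l_2$/$r_2$ summand type is a sensible way to carry out that bookkeeping. (Only a trivial slip in the aside on complexity: $\dim\mathcal{L}_t=2k$, not $4k$.)
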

\begin{proof} The proof is carrying out by straightforward calculations of equations \eqref{E.Z2} on the basis elements of the algebra ${\mathcal L_t}$.
\end{proof}

As consequence from Proposition \ref{prop111}  we have the following corollary.
\begin{cor} \label{cor2}
\[ \dim ZL^2({\mathcal L_t},{\mathcal L_t}) =4k^2-2k +t, \qquad \dim HL^2({\mathcal L_t},{\mathcal L_t}) =0.\]
\end{cor}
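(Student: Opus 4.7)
The plan is to reduce the corollary to two elementary steps. First I would read off $\dim ZL^2({\mathcal L_t},{\mathcal L_t})$ directly by counting the independent scalar parameters in the parametrization of a general $2$-cocycle supplied by Proposition~\ref{prop111}. Second, combining this count with the value $\dim BL^2({\mathcal L_t},{\mathcal L_t})=4k^2-2k+t$ from Corollary~\ref{cor1}, the general inclusion $BL^2\subseteq ZL^2$ together with equal dimensions forces $BL^2=ZL^2$, giving $HL^2({\mathcal L_t},{\mathcal L_t})=0$.

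For the counting step, write $s=k-t$ for brevity and sort the parameter families by the shape of their index set. The three square $t\times t$ families $\alpha,\beta^1,\beta^2$ together with the off-diagonal family $\nu$ on $\{i\neq j\}\subseteq[1,t]^2$ contribute $3t^2+t(t-1)=4t^2-t$. The eight rectangular $t\times s$ families $\beta^3,\beta^4,\gamma^1,\gamma^2,\xi^1,\xi^2,\theta,\tau$ each contribute $ts$, totalling $8ts$. Finally the two square $s\times s$ families $\xi^3,\xi^4$ contribute $2s^2$, and the two strictly-triangular families $\delta^1,\delta^2$ on $\{i<j\}$ together with the off-diagonal $\eta$ on $\{i\neq j\}$ contribute $2\binom{s}{2}+s(s-1)=2s^2-2s$. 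Summing gives
\[
4t^2 - t + 8ts + 4s^2 - 2s \;=\; 4(t+s)^2 - t - 2s \;=\; 4k^2 - 2k + t,
\]
as required.

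The main obstacle I anticipate is not difficulty but bookkeeping discipline: many of the scalars in Proposition~\ref{prop111} appear in more than one entry of $\varphi$, and each must be counted exactly once. For example, $\xi^3_{i,j}$ occurs both in $\varphi(f_i,y_i)$ and in $\varphi(f_i,f_j)$, $\xi^4_{i,j}$ occurs both in $\varphi(f_i,y_i)$ and in $\varphi(f_j,y_i)$, while the $\beta^\bullet$ and $\theta$ parameters are reused in the companion entries $\varphi(e_j,x_i)$, $\varphi(e_i,f_j)$, $\varphi(f_j,e_i)$, $\varphi(x_i,f_j)$, and so on. Likewise the antisymmetries $\varphi(y_j,y_i)=-\varphi(y_i,y_j)$ and $\varphi(f_i,f_j)=-\varphi(f_j,f_i)$ must be enforced without double counting. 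Once the bookkeeping is clean the arithmetic is routine, $HL^2({\mathcal L_t},{\mathcal L_t})=0$ is immediate, and the rigidity of ${\mathcal L_t}$ follows from Remark~\ref{rem1}.
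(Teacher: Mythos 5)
Your proposal is correct and follows exactly the route the paper intends: Corollary~\ref{cor2} is stated as an immediate consequence of Proposition~\ref{prop111}, obtained by counting the free parameters $\alpha,\beta^{1},\dots,\beta^{4},\nu,\gamma^{1},\gamma^{2},\delta^{1},\delta^{2},\xi^{1},\dots,\xi^{4},\eta,\theta,\tau$ once each and then comparing with $\dim BL^2({\mathcal L_t},{\mathcal L_t})=4k^2-2k+t$ from Corollary~\ref{cor1} via the inclusion $BL^2\subseteq ZL^2$. Your bookkeeping (with $s=k-t$) and the resulting sum $4t^2-t+8ts+4s^2-2s=4k^2-2k+t$ check out, so nothing is missing.
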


Now we give the main result regarding the rigidity of algebras ${\mathcal L_t}$.

\begin{thm} The algebra ${\mathcal L_t}$ is rigid algebra for any values of $t \ (0\leq t \leq k)$.
\end{thm}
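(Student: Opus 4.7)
The plan is to apply the cohomological sufficient condition for rigidity recorded in Remark \ref{rem1}: if $HL^2(L,L) = 0$ then $L$ is rigid. Thus the theorem reduces to proving $HL^2(\mathcal{L}_t, \mathcal{L}_t) = 0$ for every $t$ with $0 \le t \le k$.

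This vanishing is however precisely what the previous two corollaries deliver. From the standard identity $\dim BL^2(L, L) = (\dim L)^2 - \dim \Der(L)$ applied to $\dim \mathcal{L}_t = 2k$ together with Corollary \ref{cor1} one obtains
\[\dim BL^2(\mathcal{L}_t, \mathcal{L}_t) = (2k)^2 - (2k - t) = 4k^2 - 2k + t.\]
On the other hand, Corollary \ref{cor2}, read off by counting the free parameters $\alpha_{i,j}$, $\beta_{i,m}^{s}$, $\nu_{j,i}$, $\gamma_{j,i}^{s}$, $\delta_{j,i}^{s}$, $\xi_{i,m}^{s}$, $\eta_{j,i}$, $\theta_{j,i}$, $\tau_{j,i}$ displayed in the parametrization of Proposition \ref{prop111}, gives $\dim ZL^2(\mathcal{L}_t, \mathcal{L}_t) = 4k^2 - 2k + t$. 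Equality of these two dimensions forces $HL^2(\mathcal{L}_t, \mathcal{L}_t) = 0$, and hence $\mathcal{L}_t$ is rigid.

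The substantive difficulty has already been absorbed into Proposition \ref{prop111}, where one must solve the cocycle equation \eqref{E.Z2} on every triple of basis vectors coming from the decomposition of $\mathcal{L}_t$ into $t$ copies of $l_2$ and $k-t$ copies of $r_2$, and carefully track the interaction between the non-Lie summand (which makes $\Ann_r(\mathcal{L}_t)$ non-trivial and produces many extra cocycle components, such as those with coefficient $\beta^{1}$, $\beta^{2}$, $\beta^{3}$, $\beta^{4}$) and the Lie summand (which imposes antisymmetry-type relations such as $\varphi(y_j, y_i) = -\varphi(y_i, y_j)$ and $\varphi(f_i, f_j) = -\varphi(f_j, f_i)$). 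With that parametrization secured, the present theorem is pure bookkeeping: one matches $\dim ZL^2 = \dim BL^2$ and then invokes Remark \ref{rem1}, uniformly in the parameter $t$.
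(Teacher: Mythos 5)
Your proposal is correct and follows essentially the same route as the paper, which likewise disposes of the theorem by combining Remark \ref{rem1} with Corollary \ref{cor2}; you merely make explicit the dimension count $\dim BL^2 = (\dim \mathcal{L}_t)^2 - \dim\Der(\mathcal{L}_t) = 4k^2-2k+t = \dim ZL^2$ that the paper leaves implicit in Corollaries \ref{cor1} and \ref{cor2}. Nothing further is needed.
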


\begin{proof} The proof of the theorem completes the argumentation of Remark \ref{rem1} and Corollary \ref{cor2}.
\end{proof}

\section{Solvable $n+1$-dimensional Leibniz algebras with $n$-dimensional naturally graded non-Lie $p$-filiform nilradicals.}

In this section we describe solvable Leibniz algebra with naturally graded non-Lie $p$-filiform nilradicals under the condition $\dim Q=1$.
 We focus in the non-split $p$-filiform non-Lie Leibniz algebras case.

\subsection{Derivations of algebras $\mu_i, i=1, 2, 3$}

\

In order to start the description we need to know the derivations of naturally graded non-Lie $p$-filiform Leibniz algebras.

\begin{prop} \label{prop1} Any derivation of the algebra $\mu_1$ has the following matrix form:
\[\mathbb{D}=\begin{pmatrix}
A&B\\
C&D
\end{pmatrix},\]
where
\[A=\sum_{i=1}^{n-2k}ia_{1}e_{i,i}+\sum_{i=1}^{n-2k-1}\sum_{j=i+1}^{n-2k}a_{j-i+1}e_{i,j}, \quad B=\sum_{i=1}^{2k}b_{i}e_{1,i}+\sum_{i=1}^{k}b_{i}e_{2,k+i},\]
\[C=\sum_{i=1}^{k}c_ie_{i,n-2k}, \quad D=\begin{pmatrix}
D_1&D_2\\
0&a_1\mathbb{E}+D_1
\end{pmatrix},\]
$\ A\in M_{n-2k,n-2k}, \ B\in M_{n-2k,2k}, \ C\in M_{2k,n-2k}, \ D_1, D_2, \mathbb{E} \in M_{k,k}$ and matrix units $e_{i,j}$.
\end{prop}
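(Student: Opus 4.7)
The plan is to write a general linear map $d$ on the basis $\{e_1,\dots,e_{n-2k},f_1,\dots,f_{2k}\}$ with indeterminate coefficients, so that determining the space $\Der(\mu_1)$ amounts to identifying the shape of the four blocks $A$, $B$, $C$, $D$ that appear in the proposition. I then impose the derivation identity $d([x,y])=[d(x),y]+[x,d(y)]$ on each of the defining products of $\mu_1$, and on each of the products that must vanish.

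The structure of $A$ and of the first two rows of $B$ is obtained by induction on $i$ in the relation $d(e_{i+1})=[d(e_i),e_1]+[e_i,d(e_1)]$. The key observations are that $[e_i,e_j]=0$ for $j\ne 1$ and $[e_i,f_j]=0$ for $i\ge 2$, which collapse $[e_i,d(e_1)]$ to the single term $a_1 e_{i+1}$ as soon as $i\ge 2$. Only the base step $i=1$ produces an $f$-contribution, namely $\sum_{j=1}^{k}b_j f_{k+j}$ coming from $[e_1,f_j]=f_{k+j}$, together with an extra $a_1 e_2$ coming from $[e_1,e_1]=e_2$. This already explains why $B$ is supported on its first two rows and why the second row duplicates the first $k$ coefficients of the first row with columns shifted by $k$. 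Iterating the recursion, each successive application of $[-,e_1]$ shifts the $e$-indices up by one and contributes an additional $a_1$ to the diagonal entry, yielding exactly the Toeplitz upper-triangular pattern with diagonal entry $i\cdot a_1$ that defines $A$.

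For the blocks $C$ and $D$ I first exploit $[f_j,e_1]=0$: the resulting identity $[d(f_j),e_1]=0$ kills every $e$-component of $d(f_j)$ other than the one on $e_{n-2k}$, since the vectors $[e_l,e_1]=e_{l+1}$ are linearly independent in $l$ and $e_{n-2k+1}=0$. Next, applying the derivation identity to $[e_1,f_j]=f_{k+j}$ for $1\le j\le k$ and comparing with the direct ansatz for $d(f_{k+j})$ yields three simultaneous conclusions: (i) the $e_{n-2k}$-component of $d(f_{k+j})$ vanishes, so $C$ is supported in rows $1,\dots,k$; (ii) the $f_l$-component of $d(f_{k+j})$ vanishes for $1\le l\le k$, which is the zero lower-left block of $D$; and (iii) the $f_{k+l}$-component of $d(f_{k+j})$ equals the $(j,l)$-entry of $a_1\mathbb{E}+D_1$, which is the lower-right block. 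To conclude, I verify that the remaining vanishing brackets ($[e_1,f_j]=0$ for $j>k$, $[e_i,f_j]=0$ for $i\ge 2$, $[f_i,f_j]=0$, and $[e_i,e_j]=0$ for $j\ne 1$) impose no further conditions beyond those already extracted; the first of them in fact re-derives the vanishing of the lower-left block of $D$.

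The main obstacle is not conceptual but combinatorial: the bookkeeping must be set up so that the induction step for $A$ propagates correctly through the special base case $d(e_2)$ (whose $f$-part does \emph{not} survive into $d(e_i)$ for $i\ge 3$ precisely because $[f_l,e_1]=0$), and so that the index shift between $D_1$ and its companion block $a_1\mathbb{E}+D_1$ is tracked cleanly. Once these index considerations are laid out carefully, verification of each derivation identity is routine.
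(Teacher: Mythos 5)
Your proposal is correct and follows essentially the same route as the paper's proof: write $d$ with indeterminate coefficients on the generators $e_1,f_1,\dots,f_k$, obtain $A$ and the two nonzero rows of $B$ by induction from $d(e_{i+1})=d([e_i,e_1])$, and pin down $C$ and $D$ from $d([f_j,e_1])=0$ and $d(f_{k+j})=d([e_1,f_j])$. The only difference is that you explicitly check that the remaining vanishing brackets impose no further conditions, a verification the paper leaves implicit.
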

\begin{proof} Let $\{e_1,f_1,f_2,\dots,f_k\}$ be a generator basis elements of the algebra $\mu_1$.

We put
\[ d(e_1)=\sum\limits_{i=1}^{n-2k}a_ie_i+\sum\limits_{i=1}^{2k}b_if_i, \qquad d(f_i)=\sum\limits_{j=1}^{n-2k}c_{i,j}e_j+\sum\limits_{j=1}^{2k}d_{i,j}f_j, \quad 1\leq i\leq k.\]

From the derivation property \eqref{der} we have
\[
d(e_2)=d([e_1,e_1])=[d(e_1),e_1]+[e_1,d(e_1)]=2a_1e_2+\sum\limits_{i=3}^{n-2k}a_{i-1}e_{i}+
\sum\limits_{i=1}^{k}b_if_{k+i}.
\]

Buy applying the induction and the derivation property \eqref{der} we derive
\[ d(e_i)=ia_1e_i+\sum\limits_{t=i+1}^{n-2k}a_{t-i+1}e_t, \quad 3 \leq i \leq
n-2k.\]

Consider
\[ 0=d([f_i,e_1])=[d(f_i),e_1]+[f_i,d(e_1)]=\sum\limits_{j=1}^{n-2k-1}c_{i,j}e_{j+1}, \quad 1 \leq i \leq k.\]
Consequently,
\[ c_{i,j}=0,\quad  1 \leq i \leq k, \quad 1 \leq j \leq n-2k-1.\]

Similarly, from $d(f_{k+i})=d([e_1, f_i]), \ 1 \leq i \leq k$, we deduce
\[ d(f_{k+i})=a_{1}f_{k+i}+\sum\limits_{j=1}^{k}d_{i,j}f_{k+j}, \qquad 1\leq i\leq k.\]
\end{proof}

\begin{prop} \label{prop2} Any derivation of the algebra $\mu_2$ has the following matrix form:
\[\mathbb{D}=\begin{pmatrix}
A&B\\
C&D
\end{pmatrix},\]
where
\[A=\sum_{i=1}^{n-2k}(ia_{1}+(i-1)b_1)e_{i,i}+\sum_{i=1}^{n-2k-1}\sum_{j=i+1}^{n-2k}a_{j-i+1}e_{i,j},\]
 \[B=\sum_{i=1}^{2k}b_{i}e_{1,i}+\sum_{i=1}^{k}b_{i}e_{2,k+i}, \qquad C=\sum_{i=1}^{k}c_ie_{i,n-2k}, \qquad D=\begin{pmatrix}
D_1&D_2\\
0&D_3
\end{pmatrix},\]
\[D_1=\sum_{i=1}^{k}\sum_{j=2}^{k}d_{i,j}e_{i,j}+(a_1+b_{1})e_{1,1},\qquad D_3=D_1+a_1\mathbb{E}-\sum_{j=1}^{k}b_{j}e_{1,j},\]
with
$\ A\in M_{n-2k,n-2k}, \ B\in M_{n-2k,2k},  \ C\in M_{2k,n-2k}, \ D_1, D_2, D_3, \mathbb{E} \in M_{k,k}$
and matrix units $e_{i,j}$.
\end{prop}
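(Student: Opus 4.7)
The approach mirrors the proof of Proposition~\ref{prop1}, but extra bookkeeping is needed because $\mu_2$ has the additional brackets $[e_1,f_1]=e_2+f_{k+1}$ and $[e_i,f_1]=e_{i+1}$ for $2\le i\le n-2k-1$, which couple the $e$- and $f$-parts in new ways. The plan is to start from generic expansions $d(e_1)=\sum_{i=1}^{n-2k}a_i e_i+\sum_{i=1}^{2k}b_i f_i$ and $d(f_i)=\sum_{j=1}^{n-2k}c_{i,j}e_j+\sum_{j=1}^{2k}d_{i,j}f_j$ on the generators $\{e_1,f_1,\dots,f_k\}$, and then to squeeze out the claimed matrix structure by feeding the defining relations of $\mu_2$ into the derivation identity \eqref{eq0}.

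First I would treat the block $A$ inductively via $e_{i+1}=[e_i,e_1]$. Applying $d$ to $e_2=[e_1,e_1]$ produces the new cross term $b_1[e_1,f_1]=b_1(e_2+f_{k+1})$, which is precisely the source of the diagonal coefficient $2a_1+b_1$ on $e_2$ and of the second row $\sum_{i=1}^{k}b_i e_{2,k+i}$ in $B$. For $i\ge 2$ the identity $[e_i,f_1]=e_{i+1}$ contributes an additional $b_1 e_{i+1}$ inside $[e_i,d(e_1)]$, so the diagonal entry of $A$ at position $(i,i)$ becomes $ia_1+(i-1)b_1$ while the superdiagonal entries $a_{j-i+1}$ are unchanged from the $\mu_1$ case, and one sees in parallel that $d(e_i)$ has no $f$-component for $i\ge 3$. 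This pins down the block $A$ together with the nonzero rows of $B$.

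Next I would exploit the relations that are absent from the multiplication table. From $[f_i,e_1]=0$ one obtains $c_{i,j}=0$ for $1\le j\le n-2k-1$ exactly as in Proposition~\ref{prop1}, reducing $C$ to its claimed last-column form. The new constraints specific to $\mu_2$ come from $[e_2,f_j]=0$ for $j\ge 2$: writing out $0=[d(e_2),f_j]+[e_2,d(f_j)]$ and using that only $[e_2,f_1]=e_3$ survives on the right, one concludes $d_{i,1}=0$ for $i\ge 2$, which is exactly why the sum defining $D_1$ starts at $j=2$. Comparing the two expressions for $d(e_3)$ coming from $e_3=[e_2,e_1]$ and from $e_3=[e_2,f_1]$ then forces $d_{1,1}=a_1+b_1$, yielding the explicit $(1,1)$-entry of $D_1$.

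Finally I would read off $D_3$ by evaluating $d(f_{k+j})=d([e_1,f_j])$ for $j\ge 2$ and $d(f_{k+1})=d([e_1,f_1])-d(e_2)$. The first batch gives $d(f_{k+j})=a_1 f_{k+j}+\sum_{s=2}^{k}d_{j,s}f_{k+s}$, which accounts for all rows of $D_3$ below the first. The delicate one is $d(f_{k+1})$: after substituting $d_{1,1}=a_1+b_1$ and cancelling the $e$-part against $d(e_2)$, what remains is exactly $2a_1 f_{k+1}+\sum_{s=2}^{k}(d_{1,s}-b_s)f_{k+s}$, which is the first row of $D_1+a_1\mathbb{E}-\sum_{j=1}^{k}b_j e_{1,j}$. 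The block $D_2$ and the last column of $C$ stay unconstrained because the targets $f_j$ (for $j\le k$) and $e_{n-2k}$ lie in the appropriate annihilators of the generators. The step I expect to be hardest is this $d(f_{k+1})$ calculation: the cancellations among $e_i$-terms, the $f_{k+1}$-term and the $f_{k+j}$-terms must be tracked simultaneously so that the parameters $b_j$ end up only in the first row of $D_3$ and do not leak into lower rows of $D_3$ or back into the block $B$.
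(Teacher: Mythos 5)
Your proposal is correct and follows the same route the paper takes: the paper's own proof of Proposition~\ref{prop2} is just the one-line remark that it is a straightforward verification of the derivation property, and your computation is exactly that verification carried out in the pattern of Proposition~\ref{prop1}. All the key numerical checks go through as you describe; in particular $d_{1,1}=a_1+b_1$ from comparing the two expressions for $d(e_3)$, the cancellation of the $e$-part in $d(f_{k+1})=d([e_1,f_1])-d(e_2)$, and the resulting first row $\bigl(2a_1,\ d_{1,2}-b_2,\dots,d_{1,k}-b_k\bigr)$ of $D_3$ all agree with the stated matrix form.
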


\begin{proof} The proof is carrying out by straightforward calculation of the derivation property of the algebra $\mu_2$.
\end{proof}

\begin{prop} \label{prop3}  Any derivation of the algebra $\mu_3$ has
the following matrix form:
\[\mathbb{D}=\begin{pmatrix}
A&B\\
C&D
\end{pmatrix},\]
where
\[A=\sum_{i=1}^{n-2k}((i-1)a_{1}+a_2)e_{i,i}+\sum_{i=2}^{n-2k}a_{i}e_{1,i}+\sum_{i=3}^{n-2k-1}a_{i}e_{2,i}+\beta e_{2,n-2k}+\sum_{i=3}^{n-2k-1}\sum_{j=i+1}^{n-2k}a_{j-i+2}e_{i,j},\]
\[B=\sum_{i=1}^{2k}b_{1,i}e_{1,i}+\sum_{i=1}^{k}b_{2,i}e_{2,k+i}+\sum_{i=1}^{k}b_{1,i}e_{3,k+i}, \quad C=\sum_{i=1}^{k}c_ie_{i,n-2k}, \quad D=\begin{pmatrix}
D_1&D_2\\
0&(a_1+a_2)\mathbb{E}+D_1
\end{pmatrix}\]
with $A\in M_{n-2k,n-2k}, \ B\in M_{n-2k,2k}, \ C\in M_{2k,n-2k}, \ D_1,D_2, \mathbb{E} \in M_{k,k}$ and
matrix units $e_{i,j}$.
\end{prop}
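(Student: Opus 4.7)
The plan is to follow the same direct-verification strategy used for Propositions \ref{prop1} and \ref{prop2}. The key observation is that $\mu_3$ is generated as an algebra by $\{e_1,e_2,f_1,\dots,f_k\}$, with $e_3=[e_1,e_1]$, $e_{i+1}=[e_i,e_1]$ for $2\leq i\leq n-2k-1$, and $f_{k+j}=[e_1,f_j]=[e_2,f_j]$. Hence a derivation is determined by its values on these generators, and it suffices to write the most general ansatz
\[ d(e_1)=\sum_i\alpha_i e_i+\sum_i\beta_i f_i,\qquad d(e_2)=\sum_i\gamma_i e_i+\sum_i\delta_i f_i,\qquad d(f_j)=\sum_i\mu_i^{(j)} e_i+\sum_i\nu_i^{(j)} f_i, \]
and then determine which coefficients survive the derivation identity \eqref{eq0}.

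Next, I would apply $d$ to the defining relations $[e_i,e_1]=e_{i+1}$ and $[e_1,f_j]=f_{k+j}$ to obtain explicit inductive formulas for $d(e_i)$ ($i\geq 3$) and for $d(f_{k+j})$. Because $[e_1,e_i]=0$ for $i\geq 2$, the upper-triangular part of $A$ starting from row $3$ takes the Toeplitz shape $A_{i,j}=a_{j-i+2}$ stated in the claim, and the $f$-part of $d(e_3)$ reproduces the $f_1,\dots,f_k$-coefficients of $d(e_1)$, accounting for the repeated $b_{1,i}$ pattern in the $e_{3,k+i}$-entries of $B$.

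The crucial step is then to impose the consistency conditions coming from the duplicated bracket paths $e_3=[e_1,e_1]=[e_2,e_1]$ and $f_{k+j}=[e_1,f_j]=[e_2,f_j]$. Equating the two expressions produced for $d(e_3)$ and for $d(f_{k+j})$ yields linear identities coupling the coefficients of $d(e_1)$ and $d(e_2)$: the $e_3,\dots,e_{n-2k-1}$-entries of $d(e_2)$ are forced to agree with the corresponding entries of $d(e_1)$, while the position $(2,n-2k)$ remains unconstrained and furnishes the free parameter $\beta$. The same couplings, after appropriate relabelling, rewrite the diagonal of $A$ in the two-parameter form $(i-1)a_1+a_2$ claimed in the statement.

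The remaining Leibniz identities $d([e_i,f_j])$, $d([f_i,e_1])=0$, $d([f_i,f_j])=0$, etc., then collapse $C$ to its last column, pin $D$ down to be block upper triangular with the prescribed relation $(a_1+a_2)\mathbb{E}+D_1$ between its two diagonal blocks, and fix the shape of $B$. The principal obstacle is purely bookkeeping: keeping track of all couplings simultaneously and, in particular, verifying that the exceptional entry $\beta e_{2,n-2k}$ in $A$ genuinely survives as an independent parameter. This rests on the observation that the propagation chain $e_2\mapsto e_3\mapsto\cdots$ breaks off before any further identity can constrain the $e_{n-2k}$-component of $d(e_2)$, so no downstream relation can eliminate $\beta$.
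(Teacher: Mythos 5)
Your proposal is correct and is exactly the ``straightforward verification of the derivation property'' that the paper itself invokes: you correctly identify the generators $\{e_1,e_2,f_1,\dots,f_k\}$, propagate $d$ along $e_{i+1}=[e_i,e_1]$ and $f_{k+j}=[e_1,f_j]$ to get the Toeplitz block and the repeated $b_{1,i}$ row, and extract the couplings $A_{2,2}=A_{1,1}+A_{1,2}$ and $A_{2,i}=A_{1,i}$ ($3\le i\le n-2k-1$) from the duplicated paths $[e_1,e_1]=[e_2,e_1]$ and $[e_1,f_j]=[e_2,f_j]$, with the $(2,n-2k)$ entry surviving as $\beta$ precisely because $[e_{n-2k},e_1]=0$ terminates the chain. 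The only caveat is notational: your couplings give $A_{i,i}=(i-1)A_{1,1}+A_{1,2}$ for $i\ge 2$ with $A_{1,1}=a_1$ and $A_{1,2}=a_2$ (consistent with $[e_1,x]=a_1e_1+\cdots$ in the subsequent theorem), so the diagonal sum in the displayed statement should be read as starting from $i=2$ with a separate $a_1e_{1,1}$ term rather than literally giving $A_{1,1}=a_2$.
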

\begin{proof}
The proof is carrying out by straightforward calculation of the derivation property of the algebra $\mu_3$.
\end{proof}

\subsection{Descriptions of algebras $R(\mu_i, 1), \ i=1, 2, 3$}

\

\

Let us consider the solvable algebra $R(\mu_i, 1)=\mu_i\oplus Q, \ i=1,2,3$, and a basis $\{e_1, e_2, \dots, e_{n-p}, f_1, f_2, \dots, f_p\}$.

We set ${\mathcal F} \coloneqq \{f_1,f_2,\dots,f_k\}$ and consider the projection of the operator $\mathcal{R}_{{x |}_{{\mathcal F}}}$ to the space ${\mathcal F}$ (denoted $\varepsilon(\mathcal{R}_{{x |}_{{\mathcal F}}}))$.
Let us suppose that there exists a basis of the space ${\mathcal F}$ such that the Jordan form of the operator
$\varepsilon(\mathcal{R}_{{x |}_{{\mathcal F}}})$ can be transformed into a Jordan block $J_{\lambda}$ with $\lambda\neq0$.

\begin{thm} \label{thmmu1} An arbitrary algebra of the family $R(\mu_1, 1)$ admits a basis such that its multiplication table  has the following form:
\[R(\mu_1, 1)(a_2, \dots, a_{n-2k+1}): \quad \left\{\begin{array}{ll}
[e_i,e_1]=e_{i+1},&1\leq i\leq n-2k-1,\\[1mm]
[e_1,f_i]=f_{k+i},&1\leq i\leq k,\\[1mm]
[e_i,x]=\sum\limits_{j=i+1}^{n-2k}a_{j-i+1}e_j,&1\leq i\leq n-2k,\\[1mm]
[f_i,x]=f_i+f_{i+1},&1\leq i\leq k-1,\\[1mm]
[f_k,x]=f_k,&\\[1mm]
[f_{i},x]=f_i+f_{i+1},&k+1\leq i\leq 2k-1,\\[1mm]
[f_{2k},x]=f_{2k},&\\[1mm]
[x,f_i]=-f_i-f_{i+1},&1\leq i\leq k-1,\\[1mm]
[x,f_k]=-f_k,&\\[1mm]
[x,x]=a_{n-2k+1}e_{n-2k}.&\\[1mm]
\end{array}\right.\]
\end{thm}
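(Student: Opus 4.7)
The plan is to follow a strategy parallel to that of Theorem~\ref{thmQ=1}: first pin down the right-multiplication operator $\mathcal{R}_{x|_{\mu_1}}$ using Proposition~\ref{prop1} together with the hypothesis on its projection, and then determine the brackets $[x,e_i]$, $[x,f_j]$ and $[x,x]$ by solving the Leibniz identities. The normalizations will come from rescalings of $x$ and of the nilradical basis, and from shifts $x\mapsto x+v$ with $v\in\mu_1$.

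In the first stage, Proposition~\ref{prop1} presents $\mathcal{R}_{x|_{\mu_1}}$ through the scalars $a_1,\ldots,a_{n-2k}$, $b_1,\ldots,b_{2k}$, $c_1,\ldots,c_k$ and through the blocks $D_1,D_2$. The assumption on $\varepsilon(\mathcal{R}_{x|_{\mathcal F}})$ lets me pick a basis of $\mathcal F$ making $D_1=\lambda\mathbb E+N$, with $N$ the super-diagonal shift; the rescaling $x\mapsto\lambda^{-1}x$, $f_i\mapsto\lambda^{1-i}f_i$ (propagated to the $e_i$ and to the $f_{k+i}$ via the $\mu_1$-relations) then normalizes $\lambda=1$. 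Changes $x\mapsto x+\sum\gamma_i f_i$, $e_1\mapsto e_1+\sum\mu_j f_j$ and $f_j\mapsto f_j+\sum\nu_{j,l}e_l$ which preserve the $\mu_1$-table can be used to kill $D_2$, the $b_i$ and the $c_i$ in turn, while the comparison of the two diagonal blocks $D_1$ and $a_1\mathbb E+D_1$ of $D$ against the Jordan-block normalization forces $a_1=0$. The result is precisely the table entries $[e_i,x]=\sum_{j=i+1}^{n-2k}a_{j-i+1}e_j$, $[f_i,x]=f_i+f_{i+1}$ and $[f_{k+i},x]=f_{k+i}+f_{k+i+1}$ of the theorem, together with the expected edge cases at $f_k$ and $f_{2k}$.

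In the second stage I parameterize $[x,e_i]$, $[x,f_j]$ and $[x,x]$ as general linear combinations in the basis and impose the Leibniz identity. The identity $\mathcal L(x,e_1,e_1)=[x,e_2]=0$ starts an induction via $\mathcal L(x,e_i,e_1)=0$ giving $[x,e_i]=0$ for all $i\ge 2$. Next $\mathcal L(e_1,x,f_j)=0$, combined with the already-established action $[f_{k+j},x]=f_{k+j}+f_{k+j+1}$, pins down the $f_l$-components ($l\le k$) of $[x,f_j]$ and yields $[x,f_j]=-f_j-f_{j+1}$ for $j<k$ and $[x,f_k]=-f_k$, while simultaneously killing the $e_2$-coefficient of $[x,e_1]$ and any $f_{k+l}$-term of $[x,f_j]$. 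A shift $x\mapsto x+\sum\zeta_i e_i$ absorbs the residual components of $[x,e_1]$ into the $a_i$, after which $\mathcal L(x,x,e_1)=0$, $\mathcal L(x,x,f_1)=0$ and $\mathcal L(x,x,x)=0$ together with one further shift reduce $[x,x]$ to the single term $a_{n-2k+1}e_{n-2k}$.

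The principal obstacle is managing the bookkeeping so that each new normalization does not undo a previous one; this is nontrivial because the off-block parameters $b_i$, $c_i$, $D_2$ are coupled to the top-block action through the $\mu_1$-relations $[e_1,f_j]=f_{k+j}$. The subtlest point is the derivation of $a_1=0$: rather than following from a single Leibniz identity, it is extracted by comparing the joint behaviour of $D_1$ and $a_1\mathbb E+D_1$ against the single-Jordan-block hypothesis after rescaling. Once all these reductions are carried out and the Leibniz identities are checked to close, no admissible basis change can further eliminate any of the surviving parameters $a_2,\ldots,a_{n-2k+1}$, and the theorem follows.
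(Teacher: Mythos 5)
Your overall architecture (parameterize $\mathcal{R}_{x|_{\mu_1}}$ via Proposition~\ref{prop1}, normalize $\lambda=1$, then determine $[x,e_i]$, $[x,f_j]$, $[x,x]$ from the Leibniz identities and absorb residual terms by shifts of $x$ and $e_1$) matches the paper's proof in outline. But there is a genuine gap at the step you yourself single out as the subtlest: the claim that $a_1=0$ follows ``by comparing the joint behaviour of $D_1$ and $a_1\mathbb{E}+D_1$ against the single-Jordan-block hypothesis.'' The hypothesis only concerns $\varepsilon(\mathcal{R}_{x|_{\mathcal F}})$, i.e.\ the block $D_1$ acting on $\operatorname{span}\{f_1,\dots,f_k\}$; it says nothing about the block $a_1\mathbb{E}+D_1$ on $\operatorname{span}\{f_{k+1},\dots,f_{2k}\}$, which for any $a_1$ is still a single Jordan block (with eigenvalue $1+a_1$) and is a perfectly good derivation of $\mu_1$. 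No normalization of $D$ forces $a_1=0$. In the paper this comes from the Leibniz identities on the full algebra: $\mathcal{L}(x,f_i,e_1)=0$ forces the $e_1$-coefficient $\beta_1$ of $[x,e_1]$ to vanish (because $[f_i,e_1]=0$ while $[[x,e_1],f_i]=\beta_1 f_{k+i}$ lives in a complementary subspace to $[[x,f_i],e_1]$), and $\mathcal{L}(e_1,x,e_1)=0$ gives $\beta_1+a_1=0$; together $a_1=0$. Without this your table entries $[e_i,x]=\sum_{j>i}a_{j-i+1}e_j$ and $[f_{k+i},x]=f_{k+i}+f_{k+i+1}$ are not established.

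A secondary problem is your first-stage claim that the $c_i$ (and $D_2$) are killed by basis changes ``preserving the $\mu_1$-table.'' A substitution $f_j\mapsto f_j+\sum_l\nu_{j,l}e_l$ destroys $[f_j,e_1]=0$ unless $\nu_{j,l}=0$ for all $l\le n-2k-1$, and the surviving shift by $e_{n-2k}$ does not alter $c_j$ once $a_1=0$ (since $[e_{n-2k},x]=0$ then). These coefficients are not removable by a change of basis; in the paper they vanish as a consequence of $\mathcal{L}(x,f_i,x)=0$ \emph{after} $[x,f_i]=-f_i-f_{i+1}$ has been normalized. So both of the points where you replace a Leibniz-identity computation by a normalization argument are exactly the points where the argument breaks; the remainder of your plan (the induction giving $[x,e_i]=0$ for $i\ge2$, the treatment of $[x,f_j]$ and $[x,x]$, and the final shifts) is consistent with the paper.
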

\begin{proof} From Proposition \ref{prop1} we have the products in the algebra $R(\mu_1, 1)$:
\[\left\{\begin{array}{ll}
[e_i,e_1]=e_{i+1},&1\leq i\leq n-2k-1,\\[1mm]
[e_1,f_i]=f_{k+i},&1\leq i\leq k,\\[1mm]
[e_1,x]=\sum\limits_{i=1}^{n-2k}a_ie_i+\sum\limits_{i=1}^{2k}b_if_i,&\\[1mm]
[e_2,x]=2a_1e_2+\sum\limits_{i=3}^{n-2k}a_{i-1}e_i+\sum\limits_{i=1}^{k}b_if_{k+i},&\\[1mm]
[e_i,x]=ia_1e_i+\sum\limits_{j=i+1}^{n-2k}a_{j-i+1}e_j,&3\leq i\leq n-2k,\\[1mm]
[f_i,x]=c_ie_{n-2k}+\sum\limits_{j=1}^{2k}d_{i,j}f_j,&1\leq i\leq k,\\[1mm]
[f_{k+i},x]=a_1f_{k+i}+\sum\limits_{j=1}^{k}d_{i,j}f_{k+j},&1\leq i\leq k.\\[1mm]
\end{array}\right.\]

Let us introduce the notations:
\[[x,e_1]=\sum\limits_{i=1}^{n-2k}\beta_ie_i+\sum\limits_{i=1}^{2k}\beta_{n-2k+i}f_i, \ [x,f_i]=\sum\limits_{j=1}^{n-2k}\gamma_{i,j}e_j+\sum\limits_{j=1}^{2k}\varphi_{i,j}f_j, \ 1\leq i\leq k, \ [x,x]=\sum\limits_{i=1}^{n-2k}\delta_{i}e_i+\sum\limits_{i=1}^{2k}\theta_{i}f_i.\]

Since the space ${\mathcal F}$ forms an abelian algebra, we are in the conditions of Theorem \ref{thmQ=1}.
 Moreover, the products $[e_1,e_1]=e_2, \ [e_1,f_i]=f_{k+i}$ ensure that $e_1, f_i\notin \Ann_r(R(\mu_1, 1), \ 1\leq i \leq k$.
  In particular, we are in the conditions of the algebra $R_2$. This implies the existence of a basis of ${\mathcal F}$ such that
\[\begin{array}{ll}
[f_i,x]=c_ie_{n-2k}+f_i+f_{i+1}+\sum\limits_{j=k+1}^{2k}d_{i,j}f_j,&1\leq i\leq k-1,\\[1mm]
[f_k,x]=c_ke_{n-2k}+f_k+\sum\limits_{j=k+1}^{2k}d_{k,j}f_j,&\\[1mm]
[x,f_i]=\sum\limits_{j=1}^{n-2k}\gamma_{i,j}e_j-f_i-f_{i+1}+\sum\limits_{j=k+1}^{2k}\varphi_{i,j}f_j,&1\leq i\leq k-1,\\[1mm]
[x,f_k]=\sum\limits_{j=1}^{n-2k}\gamma_{k,j}e_j-f_k+\sum\limits_{j=k+1}^{2k}\varphi_{k,j}f_j,&\\[1mm]
[x,x]=\sum\limits_{i=1}^{n-2k}\delta_{i}e_i+\sum\limits_{i=k+1}^{2k}\theta_{i}f_i.&\\[1mm]
\end{array}\]

The equalities \[{\mathcal L}(x, f_i, e_1)={\mathcal L}(e_1, x, e_1)={\mathcal L}(x, e_1, f_i)=0\]
imply
\[\begin{array}{ll}
\gamma_{i,j}=\beta_1=0, & 1\leq i\leq k, \ 1\leq j\leq n-2k-1,\\[1mm]
\alpha_1=0, \ \beta_{n-2k+i}=-b_i,  & 1\leq i\leq k,\\[1mm]
[x,f_{k+i}]=0,  & 1 \leq i\leq k.
\end{array}\]

Since $\{e_2,e_3,\dots,e_{n-2k},f_{k+1},\dots,f_{2k}\}\subseteq \Ann_r(R(\mu_1, 1))$ and $[x,x]\in \Ann_r(R(\mu_1, 1))$ while $e_1\notin \Ann_r(R(\mu_1, 1))$, we conclude $\delta_1=0$.

By setting
\[f_i^\prime=f_i-\chi_{i,n-2k}e_{n-2k}-\sum\limits_{j=k+1}^{2k}\psi_{i,j}f_j, \ 1\leq i \le k,\]
where $\chi_{k,n-2k}=\gamma_{k,n-2k}, \quad \psi_{k,j}=\varphi_{k,j}$
and parameters $\chi_{i,n-2k}, \psi_{i,j}$ for $1\leq i \leq k-1$, can be recursively obtained from the products
\[[x,f_i^\prime]+f_i^\prime=-f_{i+1}^\prime,\]
we can assume that
\[[x,f_i]=-f_i-f_{i+1}, \ \ 1\leq i\leq k-1, \quad [x,f_k]=-f_k.\]

The equalities ${\mathcal L}(x, f_i, x)=0, \ 1\leq i \leq k$ we derive $c_i=d_{i,j}=0, \ 1\leq i\leq k, \ \ k+1\leq
j\leq 2k$, that is, we obtain
\[[f_i,x]=f_i+f_{i+1}, \  1\leq i\leq k-1, \quad [f_k,x]=f_k.\]

By taking the change of basis element:
\[x^\prime=x-\sum\limits_{i=2}^{n-2k}\beta_ie_{i-1},\]
we obtain $[x^\prime,e_1]=-\sum\limits_{i=1}^{k}b_{i}f_i+\sum\limits_{i=1}^{k}\beta_{n-k+i}f_{k+i}$,
that is, we can assume that $\beta_{i}=0, \ 2\leq i\leq n-2k$.

The equality ${\mathcal L}(x, e_1, x)=0$ implies
\[\delta_{i}=0, \quad 2\leq i\leq n-2k-1, \qquad \beta_{n-k+i}=0, \quad 1\leq i\leq k.\]

By putting
\begin{align*}
e_1^\prime & =e_1+\sum\limits_{i=1}^{k}\sum\limits_{j=1}^{i}(-1)^{i+j-1}b_{j}f_{i}+
\sum\limits_{i=1}^{k}\sum\limits_{j=1}^{i}(-1)^{i+j-1}b_{k+j}f_{k+i},\\
x^\prime & =x+\sum\limits_{i=1}^{k}\sum\limits_{j=1}^{i}(-1)^{i+j-1}\theta_{k+j}f_{k+i},
\end{align*}
we deduce
\[b_{i}=b_{k+i}=\theta_{k+i}=0, \qquad 1\leq i\leq k.\]
Thus, we obtain the  multiplication tables of the algebras $R(\mu_1)(a_2, \dots, a_{n-2k+1})$ of the assertion of the theorem.
\end{proof}

In the next proposition a necessary and sufficient condition for the existence of an isomorphism between two algebras of the family $R(\mu_1, 1)(a_2, \dots, a_{n-2k+1})$ is established.

\begin{prop} \label{propmu1} Two algebras $R(\mu_1, 1)^\prime (a_2^\prime, \dots, a_{n-2k+1}^\prime)$ and $R(\mu_1, 1)(a_2, \dots, a_{n-2k+1})$ are isomorphic if and only if there exists $A\in \mathbb{C}$ such that
\[a_i^\prime=\frac{a_i}{A^{i-1}}, \quad 2\leq i\leq n-2k+1.\]
\end{prop}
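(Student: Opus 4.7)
The plan is to handle the two directions separately: sufficiency will follow from an explicit change-of-basis construction parametrised by $A\in\mathbb{C}^\times$, while necessity will be deduced by showing that any isomorphism is essentially forced into this form by the spectrum of $\mathcal{R}_x$ and by the limited automorphism group of $\mu_1$.

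For the \textbf{if} direction I would define $\varphi(e_i)=A^{i}e_{i}$ for $1\le i\le n-2k$, $\varphi(f_{j})=f_{j}$ and $\varphi(f_{k+j})=Af_{k+j}$ for $1\le j\le k$, and $\varphi(x)=x$. A bracket-by-bracket check against the multiplication of Theorem \ref{thmmu1} shows that the $\mu_1$-relations and the $[f_\bullet,x]$, $[x,f_\bullet]$ relations all survive because the common power of $A$ balances on the two sides of each equality. The product $[e_i,x]=\sum_{j=i+1}^{n-2k}a_{j-i+1}e_j$ transports to $\sum_{j=i+1}^{n-2k}A^{i-j}a_{j-i+1}\varphi(e_j)$, and after setting $m=j-i+1$ this gives exactly $a_m^\prime=a_m/A^{m-1}$; the residual square $[x,x]=a_{n-2k+1}e_{n-2k}$ is consistent with the same formula at $m=n-2k+1$.

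For the \textbf{only if} direction let $\varphi\colon R(\mu_1,1)(a_2,\dots,a_{n-2k+1})\to R(\mu_1,1)^\prime(a_2^\prime,\dots,a_{n-2k+1}^\prime)$ be an arbitrary isomorphism. Since the nilradical is a characteristic ideal, $\varphi$ restricts to an automorphism of $\mu_1$ and we may write $\varphi(x)=\gamma x^\prime+n_0$ with $\gamma\ne 0$ and $n_0\in\mu_1$. From Theorem \ref{thmmu1} the operator $\mathcal{R}_x\big|_{\mu_1}$ is strictly upper-triangular on the $e$-block and equal to the identity plus a strict triangle on each $f$-block, so $\operatorname{tr}(\mathcal{R}_x|_{\mu_1})=2k$. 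Because $\mathcal{R}_{n_0}|_{\mu_1}$ is nilpotent its trace vanishes, and the conjugacy $\mathcal{R}_{\varphi(x)}=\varphi\circ\mathcal{R}_x\circ\varphi^{-1}$ forces $2k=\gamma\cdot 2k$, whence $\gamma=1$. An argument parallel to Proposition \ref{prop1}, with the derivation property replaced by multiplicativity, shows that the automorphism group of $\mu_1$ depends essentially on a single dilation $A\ne 0$ (the coefficient of $e_1$ in $\varphi(e_1)$) together with a common scaling of the $f_j$'s and inner translations which will be absorbed below.

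The final step is to substitute the general form of $\varphi$ into the defining identity $[\varphi(e_i),\varphi(x)]=\varphi([e_i,x])$ and read off the recursion $a_m^\prime=a_m/A^{m-1}$. The main obstacle I expect is the bookkeeping in this last step: one must verify that each auxiliary parameter beyond $A$ either leaves the $a_m$ untouched or can be eliminated through a further adjustment of exactly the type already used to fix the normal form in the proof of Theorem \ref{thmmu1} (namely the cancellations $\beta_i=0$ for $i\ge 2$ and $b_i=b_{k+i}=\theta_{k+i}=0$). Once this reduction is in place, the single-parameter $\mathbb{C}^\times$-action $A\mapsto(a_m/A^{m-1})$ emerges as the only residual freedom acting on the tuple $(a_2,\dots,a_{n-2k+1})$, which is precisely the statement of the proposition.
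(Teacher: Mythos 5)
Your plan is essentially the paper's. The paper settles both implications with a single computation: it writes the most general change of generator basis $e_1'=\sum A_ie_i+\sum B_if_i$, $f_i'=\sum C_{i,j}e_j+\sum D_{i,j}f_j$, $x'=Hx+\sum E_ie_i+\sum F_if_i$ compatible with the two normal forms, derives the constraints ($B_i=C_{i,j}=E_j=0$, $H=1$, $D_{i,j}=D_{1,j-i+1}$), and reads off $a_i'=a_i/A_1^{i-1}$ from $[e_1',x']=\sum a_i'e_i'$ and $[x',x']=a_{n-2k+1}'e_{n-2k}'$; since $A_1$ is the only surviving free parameter, this gives both directions at once. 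Your explicit diagonal map $\varphi(e_i)=A^ie_i$, $\varphi(f_j)=f_j$, $\varphi(f_{k+j})=Af_{k+j}$, $\varphi(x)=x$ for the ``if'' part is correct (it is exactly the diagonal part of that general transformation), and your trace argument for $\gamma=1$ is a genuinely different and rather cleaner route than the paper's, which gets $H=1$ from the eigenvector relations $[x',f_i']=-f_i'-f_{i+1}'$, $[x',f_k']=-f_k'$ (if $H\neq1$ these force $D_{k,i}=0$ and hence $f_{2k}'=0$, a contradiction); both are valid. The one soft spot is the step you yourself flag as ``the main obstacle'': verifying that the off-diagonal automorphism parameters ($A_2,\dots,A_{n-2k}$ in $\varphi(e_1)$, the $E_{n-2k}$, $F_i$ in $\varphi(x)$, the $D_{i,k+j}$, etc.) do not perturb the relation $a_m'=a_m/A^{m-1}$. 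That verification is precisely the content of the proposition and must be carried out, not merely expected; it does close, e.g.\ comparing the $e_3$-coefficients of $[e_1',x']$ and $\sum_i a_i'e_i'$ one finds $A_1a_3+A_2a_2$ on the left and $a_2'A_1A_2+a_3'A_1^3$ on the right, so after substituting $a_2'=a_2/A_1$ the $A_2$-terms cancel and $a_3'=a_3/A_1^2$, with the higher $A_j$ cancelling in the same telescoping fashion. With that computation written out, your argument becomes the paper's proof.
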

\begin{proof}
Let us consider the general change of generator basis elements of the algebra $R(\mu_1, 1)$:
\[\begin{array}{ll}
e_1^\prime=\sum\limits_{i=1}^{n-2k}A_{i}e_i+\sum\limits_{i=1}^{2k}B_{i}f_i, &\\[1mm]
f_i^\prime=\sum\limits_{j=1}^{n-2k}C_{i,j}e_j+\sum\limits_{j=1}^{2k}D_{i,j}f_j,& 1\leq i\leq k,\\[1mm]
x^\prime=Hx+\sum\limits_{i=1}^{n-2k}E_{i}e_i+\sum\limits_{i=1}^{2k}F_{i}f_i.\\[1mm]
\end{array}
\]

From the products
\[[e_i^\prime,e_1^\prime]=e_{i+1}^\prime, \quad 1\leq i\leq n-2k-1,\] of the algebra $R(\mu_1, 1)^\prime$ we derive
\[e_{2}^\prime=A_1\sum\limits_{i=2}^{n-2k}A_{i-1}e_i+A_1\sum\limits_{i=1}^{k}B_{i}f_{k+i}, \qquad e_{i}^\prime=A_1^{i-1}\sum\limits_{j=i}^{n-2k}A_{j-i+1}e_j, \ 3\leq i\leq n-2k.\]

By considering
\[[x^\prime,e_1^\prime]=[f_i^\prime,e_1^\prime]=[f_i^\prime,f_i^\prime]=0, \quad 1\leq
i\leq k,\] we deduce
\[C_{i,j}=B_{i}=E_{j}=0,\qquad 1\leq i\leq k,\quad 1\leq j\leq n-2k-1.\]

Similarly, from the products
$[x^\prime,f_i^\prime]=-f_i^\prime-f_{i+1}^\prime, \ 1\leq i\leq k-1$, and $[x^\prime,f_k^\prime]=-f_k^\prime$
we conclude
\[\begin{array}{ll}
H=1, \ D_{i,j}=0,  & 1\leq j<i\leq k,\\[1mm]
D_{i,j}=D_{1,j-i+1}, & 1\leq i\leq j\leq k,\\[1mm]
C_{i,n-2k}=D_{i,k+j}=0, & 1\leq i\leq k, \ \ 1\leq j\leq k.\\[1mm]
\end{array}\]

The relations between parameters  $\{a_i^\prime\}$ and $\{a_i\}$ follow from the
products
\[[e_1^\prime,x^\prime]=\sum\limits_{i=2}^{n-2k}a_{i}^\prime e_i^\prime, \qquad [x^\prime,x^\prime]=a_{n-2k+1}^\prime e_{n-2k}^\prime.\]

Namely, we have
\[a_i^\prime=\frac{a_i}{A_1^{i-1}}, \quad 2\leq i\leq n-2k+1.\]
\end{proof}

Below, we present an analogue of Theorem \ref{thmmu1} for the family $R(\mu_2 , 1)$.

\begin{thm} An arbitrary algebra of the family $R(\mu_2 , 1)$ admits a basis such that its  multiplication table has the following form:
\[R(\mu_2, 1)(\alpha, \beta, \gamma): \quad\left\{\begin{array}{ll}
\mu_2,\\[1mm]
[e_1,x]=f_1+\alpha f_{k+1},&\\[1mm]
[e_2,x]=e_2+f_{k+1},&\\[1mm]
[e_i,x]=(i-1)e_i,&3\leq i\leq n-2k,\\[1mm]
[f_i,x]=-[x,f_i]=f_i+f_{i+1},&1\leq i\leq k-1,\\[1mm]
[f_k,x]=-[x,f_k]=f_k,&\\[1mm]
[f_{k+1},x]=f_{k+2},&\\[1mm]
[f_{i},x]=f_i+f_{i+1},&k+2\leq i\leq 2k-1,\\[1mm]
[f_{2k},x]=f_{2k},&\\[1mm]
[x,e_1]=-f_1+\beta\sum\limits_{i=1}^{k}(-1)^{i-1}f_{k+i},&\\[1mm]
[x,x]=\gamma f_{k+1}.&\\[1mm]
\end{array}\right.\]
\end{thm}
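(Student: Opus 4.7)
The plan is to follow the template of Theorem~\ref{thmmu1}, replacing Proposition~\ref{prop1} by Proposition~\ref{prop2}. The new feature, and the main source of difficulty, is the summand $f_{k+1}$ in $[e_1,f_1]=e_2+f_{k+1}$, which couples the $e$-part to the $f$-part and rigidifies the derivation algebra: the two diagonal blocks of $\mathbb{D}$ in Proposition~\ref{prop2} differ by $b_1\cdot\mathbb{E}$ rather than being equal up to $a_1\cdot\mathbb{E}$ as in Proposition~\ref{prop1}. This coupling is what forces the much smaller parameter count (only three parameters $\alpha,\beta,\gamma$) in the conclusion.

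I would first use Proposition~\ref{prop2} to parametrise the products $[e_i,x]$, $[f_i,x]$ and $[f_{k+i},x]$ in terms of the derivation parameters $a_1,b_1,\{a_j\}_{j\ge 2},\{b_j\},\{c_i\},\{d_{i,j}\}$, and introduce unknowns
\[
[x,e_1]=\sum_{i}\beta_i e_i+\sum_{i}\beta_{n-2k+i}f_i,\ [x,f_i]=\sum_{j}\gamma_{i,j}e_j+\sum_{j}\varphi_{i,j}f_j,\ [x,x]=\sum_{i}\delta_i e_i+\sum_{i}\theta_i f_i.
\]
Since $\mathcal{F}=\{f_1,\dots,f_k\}$ is abelian and each $f_j$ lies outside $\Ann_r$ (because $[e_1,f_j]\ne 0$), Theorem~\ref{thmQ=1} applied to $\mathcal{F}\oplus\langle x\rangle$ places us in the $R_2$ case. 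After a change of basis inside $\mathcal{F}$ and a rescaling, this gives
\[
[f_i,x]=f_i+f_{i+1},\ [f_k,x]=f_k,\ [x,f_i]=-f_i-f_{i+1},\ [x,f_k]=-f_k, \ 1\le i\le k-1,
\]
and, crucially, fixes the eigenvalue of $\mathcal{R}_x$ on $\mathcal{F}$ to be $1$.

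Next, I would impose the Leibniz identities. From $\mathcal{L}(x,f_i,e_1)=0$ and $\mathcal{L}(x,e_1,f_j)=0$ one recovers the vanishing of most of the $\gamma$- and $\beta$-coefficients (the same type of cancellations appearing in the proof of Theorem~\ref{thmmu1}). The new essential identity is $\mathcal{L}(e_1,x,f_1)=0$: this is where the $e_2+f_{k+1}$ coupling enters, and it forces $a_1=0$, $b_1=1$. These values produce the rigid diagonal $[e_i,x]=(i-1)e_i$ for $i\ge 3$ and the unavoidable $f_{k+1}$-summand in $[e_2,x]$. Together with $\mathcal{L}(x,x,x)=0$ this also confines $[x,x]$ to $\langle f_{k+1}\rangle$ up to $e$-terms that can be absorbed.

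The final step is a clean-up by basis changes parallel to those used in Theorem~\ref{thmmu1}: substitutions $x'=x+\sum\lambda_i e_{i-1}+\sum\mu_j f_j+\sum\nu_j f_{k+j}$, $e_1'=e_1+\text{lower order terms}$, and $f_i'=f_i+\text{lower order terms}$ eliminate the remaining $a_j$ ($j\ge 2$), the $b_j$ with $j\notin\{1,k+1\}$, the tails in $[x,f_i]$, and bring $[x,e_1]$ into $-f_1+\beta\sum_{i=1}^{k}(-1)^{i-1}f_{k+i}$ and $[x,x]$ into $\gamma f_{k+1}$. The main obstacle I expect is the book-keeping: every Leibniz identity involving $f_1$ is coupled through the extra term $f_{k+1}$, so verifying that exactly the three parameters $\alpha,\beta,\gamma$ survive (and in the asserted positions) requires a careful identity-by-identity check that no further cancellation is possible and that no admissible substitution can further reduce them.
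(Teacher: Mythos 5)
Your proposal is correct and follows essentially the approach the paper intends: the paper prints no proof for this theorem, presenting it as an ``analogue'' of Theorem~\ref{thmmu1}, and your plan (Proposition~\ref{prop2} for $\mathcal{R}_{x|_N}$, the $R_2$-case normalization on $\mathcal{F}$ fixing the eigenvalue to $1$, i.e.\ $a_1+b_1=1$, then Leibniz identities and basis changes) is exactly that template. One small misattribution: with $a_1+b_1=1$ already imposed, $\mathcal{L}(e_1,x,f_1)=0$ only yields $\gamma_{1,1}=0$; the constraints $a_1=0$, $b_1=1$ come instead from $\mathcal{L}(x,f_1,e_1)=0$ (whose $f_{k+1}$-coefficient, produced by the coupling $[e_1,f_1]=e_2+f_{k+1}$, kills the $e_1$-component of $[x,e_1]$) combined with $\mathcal{L}(e_1,x,e_1)=0$ (equivalently, $[e_1,x]+[x,e_1]\in\Ann_r$), both of which are already in your list of identities, so this is a slip of labeling rather than a gap.
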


The classification of the family of algebras $R(\mu_2, 1)(\alpha, \beta, \gamma)$ is presented in the following proposition.

\begin{prop} Any algebra of the family $R(\mu_2, 1)(\alpha, \beta, \gamma)$ is isomorphic to one of the following pairwise non-isomorphic algebras:
\[R(\mu_2, 1)(0, 0, 0), \quad R(\mu_2, 1)(0, 0, 1), \quad R(\mu_2, 1)(0, 1, \gamma), \quad R(\mu_2, 1)(1, \beta, \gamma), \  \beta, \gamma \in \mathbb{C}.\]
\end{prop}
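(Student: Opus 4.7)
The plan is to follow the same strategy used in Proposition \ref{propmu1}: take the most general invertible change of generators $\{e_1', f_1', \dots, f_k', x'\}$ of $R(\mu_2, 1)(\alpha, \beta, \gamma)$ and, by demanding that the new basis obey the multiplication table of $R(\mu_2, 1)(\alpha', \beta', \gamma')$, derive explicit formulas for the transformed parameters $(\alpha', \beta', \gamma')$ in terms of $(\alpha, \beta, \gamma)$ and the essential free parameters of the base change. Proposition \ref{prop2} describing $\Der(\mu_2)$ furnishes the template for the admissible shape of such a change of basis.

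Step 1 (reducing the shape of the base change). Write $e_1' = \sum A_i e_i + \sum B_i f_i$, $f_i' = \sum C_{i,j} e_j + \sum D_{i,j} f_j$, $x' = H x + \sum E_i e_i + \sum F_i f_i$. Exactly as in the proof of Proposition \ref{propmu1}, impose the $\mu_2$-relations $[e_i',e_1'] = e_{i+1}'$, $[e_1',f_1'] = e_2' + f_{k+1}'$ and the normalised $f$-brackets $[f_i',x'] = f_i' + f_{i+1}'$, $[f_k',x']=f_k'$, $[f_{k+1}',x']=f_{k+2}'$, etc. These relations pin down $H = 1$, force vanishing of most of the $C_{i,j}$, $E_i$, $F_i$, and express the $D_{i,j}$ in terms of a single row (as in the previous proposition), leaving essentially two free parameters: a nonzero scalar $A := A_1 \in \mathbb{C}^\times$ and a scalar governing the shift of $x'$ in the direction of $f_{k+1}$.

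Step 2 (transformation of $(\alpha, \beta, \gamma)$ and normal forms). Substituting the restricted change of basis into the three defining brackets $[e_1', x'] = f_1' + \alpha' f_{k+1}'$, $[x', e_1'] = -f_1' + \beta' \sum_{i=1}^{k}(-1)^{i-1} f_{k+i}'$ and $[x', x'] = \gamma' f_{k+1}'$ yields a system for $(\alpha', \beta', \gamma')$. After absorbing all additive offsets by the residual shift freedom in $x'$ and in $e_1'$, the three parameters are found to scale uniformly with $A$, namely $\alpha' = A\alpha$, $\beta' = A\beta$, $\gamma' = A\gamma$ (up to the implicit scaling of $f_{k+1}'$). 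This single $\mathbb{C}^\times$-action on $\mathbb{C}^3$ has four orbit types: the origin $(0,0,0)$; the orbit of $(0,0,1)$ (all triples with only $\gamma \neq 0$); a one-parameter family of orbits with representatives $(0,1,\gamma)$, $\gamma \in \mathbb{C}$; and a two-parameter family of orbits with representatives $(1,\beta,\gamma)$, $(\beta,\gamma) \in \mathbb{C}^2$. These are exactly the four normal forms listed.

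Step 3 (pairwise non-isomorphism). Within each of the families $R(\mu_2, 1)(0,1,\gamma)$ and $R(\mu_2,1)(1,\beta,\gamma)$, once $\alpha$ (resp. $\beta$) has been fixed to $1$ the scaling $A$ is determined, so the remaining entries $\beta, \gamma$ (resp. $\gamma$) become isomorphism invariants; likewise the three listed forms $R(\mu_2,1)(0,0,0)$, $R(\mu_2,1)(0,0,1)$, $R(\mu_2,1)(0,1,\gamma)$ and $R(\mu_2,1)(1,\beta,\gamma)$ are distinguished by the pair of conditions ``$\alpha = 0$'' and ``$\beta = 0$'' in the normal form, which are invariant since they correspond to the vanishing of $A\alpha$ and $A\beta$. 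The main obstacle is Step~1: the cross-term $[e_1,f_1] = e_2 + f_{k+1}$ of $\mu_2$ couples the $e$- and $f$-blocks in a way that is absent in $\mu_1$, so the elimination of non-diagonal entries of the change-of-basis matrix is noticeably more delicate than in Proposition \ref{propmu1} and requires careful book-keeping before the clean scaling law in Step 2 emerges.
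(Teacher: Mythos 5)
Your proposal follows essentially the same route as the paper: write down the general change of generators, use the structure relations of $\mu_2$ and the normalised $f$-brackets to force $H=1$ and kill the off-diagonal freedom, and read off how $(\alpha,\beta,\gamma)$ transform under the one remaining essential parameter $A_1$, which yields exactly the four listed orbit types. The one inaccuracy is your claimed uniform scaling law: since $f_{k+1}'=[e_1',f_1']-e_2'$ acquires leading coefficient $A_1^2$ (because $D_{1,1}=A_1+B_1$ is forced), the paper obtains $\alpha'=\alpha/A_1$, $\beta'=\beta/A_1$ but $\gamma'=\gamma/A_1^2$, i.e.\ $\gamma$ scales quadratically rather than linearly; this does not change the resulting normal forms or the non-isomorphism argument, so the conclusion stands.
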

\begin{proof} In a similar way as in the proof of Proposition \ref{propmu1}, we consider the general transformation of generator basis elements:
\[\begin{array}{ll}
e_1^\prime=\sum\limits_{i=1}^{n-2k}A_{i}e_i+\sum\limits_{i=1}^{2k}B_{i}f_i,& \\[1mm]
f_i^\prime=\sum\limits_{j=1}^{n-2k}C_{i,j}e_j+\sum\limits_{j=1}^{2k}D_{i,j}f_j,& 1\leq i\leq k,\\[1mm]
x^\prime=\sum\limits_{i=1}^{n-2k}E_{i}e_i+\sum\limits_{i=1}^{2k}F_{i}f_i+Hx.
\end{array}
\]

Then from the products
\[[e_i^\prime,e_1^\prime]=e_{i+1}^\prime, \ 1\leq i\leq n-2k-1, \quad f_{k+1}^\prime=[e_1^\prime,f_1^\prime]-e_2^\prime, \quad f_{k+i}^\prime=[e_1^\prime,f_i^\prime], \ \ 2\leq i\leq k,\]
we derive the rest of new basis elements
\[\begin{array}{ll}
e_{2}^\prime=(A_1+B_1)\sum\limits_{i=2}^{n-2k}A_{i-1}e_i+A_1\sum\limits_{i=1}^{k}B_{i}f_{k+i},\\[1mm]
e_{i}^\prime=(A_1+B_1)^{i-1}\sum\limits_{j=i}^{n-2k}A_{j-i+1}e_j, & 3\leq i\leq n-2k, \\[1mm]
f_{k+1}^\prime=A_1\sum\limits_{i=1}^{k}(D_{1,i}-B_i))f_{k+i},& A_1\neq0\\[1mm]
f_{k+i}^\prime=A_1\sum\limits_{j=2}^{k}D_{i,j}f_{k+j}, & 2\leq i\leq k.\\[1mm]
\end{array}\]

By verifying the multiplications of $R(\mu_2, 1)(\alpha^\prime, \beta^\prime, \gamma^\prime)$ in the new basis
we obtain the relations between the parameters  $\{\alpha^\prime, \beta^\prime, \gamma^\prime\}$ and $\{\alpha, \beta, \gamma\}:$
\[\alpha^\prime=\frac{\alpha}{A_1}, \qquad \beta^\prime=\frac{\beta}{A_1}, \qquad \gamma^\prime=\frac{\gamma}{A_1^2}.\]
\end{proof}

Analogously, we have the description of solvable algebras.
\begin{thm} An arbitrary algebra of the family $R(\mu_3, 1)$ admits a basis such that its   multiplication table  has the following form:
\[R(\mu_3,1)(I):\quad \left\{\begin{array}{ll}
[e_i,e_1]=e_{i+1},&2\leq i\leq n-2k-1,\\[1mm]
[e_2,f_i]=f_{k+i},&1\leq i\leq k,\\[1mm]
[e_1,x]=a_1e_1+a_{n-2k}e_{n-2k}+b_{1}f_1+b_{2}f_{k+1},&\\[1mm]
[e_2,x]=(a_1+a_2)e_2+\sum\limits_{i=4}^{n-2k-1}a_{i}e_i+\beta e_{n-2k},&\\[1mm]
[e_3,x]=(2a_1+a_2)e_3+\sum\limits_{i=5}^{n-2k}a_{i-1}e_i+b_{1}f_{k+1},&\\[1mm]
[e_i,x]=((i-1)a_1+a_2)e_i+\sum\limits_{j=i+2}^{n-2k}a_{j-i+2}e_j,&4\leq i\leq n-2k,\\[1mm]
[f_i,x]=-[x,f_i]=f_i+f_{i+1},&1\leq i\leq k-1,\\[1mm]
[f_k,x]=-[x,f_k]=f_k,&\\[1mm]
[f_{k+i},x]=(a_1+a_2+1)f_{k+i}+f_{k+i+1},&1\leq i\leq k-1,\\[1mm]
[f_{2k},x]=(a_1+a_2+1)f_{2k},&\\[1mm]
[x,e_1]=-a_{1}e_1-b_{1}f_1+\sum\limits_{i=1}^{k}\theta_{n-k+i}f_{k+i},&\\[1mm]
[x,e_2]=\gamma e_{n-2k},&\\[1mm]
[x,x]=\delta_{1}e_{n-2k-1}+\delta_{2}e_{n-2k}+\delta_{3}f_{k+1},&\\[1mm]
\end{array}\right.\]
where $I=\{a_i, b_1, b_2, \beta, \gamma, \delta_{1}, \delta_{2}, \delta_{3}, \theta_{n-k+j}\}$ with $1\leq i \leq n-2k, \ i\neq 3, \ 1\leq j \leq k$,
 and for these parameters the following equalities hold true
\begin{equation} \label{eq11} \begin{array}{ll}
(a_1+a_2)\gamma=0, &(n-2k-2)a_1\gamma=0,\\[1mm]
\delta_{1}=-a_1a_{n-2k},& a_1b_{2}=(-1)^{k-1}(a_2+1)^k\theta_{n},\\[1mm]
\theta_{n-i}=(-1)^{i}(a_2+1)^i\theta_{n}, & 0\leq i\leq k-1.\end{array}
\end{equation}
\end{thm}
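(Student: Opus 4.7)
The plan is to mimic the structure of the proofs for $R(\mu_1,1)$ and $R(\mu_2,1)$, leveraging Proposition 3 to pin down the action of $\mathcal{R}_x$ on $\mu_3$, then applying the Leibniz identity systematically and using basis changes to simplify the remaining coefficients.

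First, I would apply Proposition 3 to write out the products $[e_i,x]$ and $[f_i,x]$ in their general form, involving the parameters $a_1,a_2,a_i\ (i\ge 3)$, $\beta$, $b_{1,i}$, $b_{2,i}$ and a matrix $D_1$. Next, since $\mathcal{F}=\{f_1,\dots,f_k\}$ forms an abelian subalgebra and $e_1,e_2,f_i\notin\Ann_r(R(\mu_3,1))$ (because $[e_1,e_1]=e_3$, $[e_2,e_1]=e_3$ and $[e_1,f_i]=[e_2,f_i]=f_{k+i}$), I am in the hypotheses of Theorem \ref{thmQ=1}: the restriction-projection of $\mathcal{R}_x$ to $\mathcal{F}$ must be conjugate (after rescaling to $\lambda=1$) to the $R_2$-model, which forces $[f_i,x]=-[x,f_i]=f_i+f_{i+1}$ for $1\le i\le k-1$ and $[f_k,x]=-[x,f_k]=f_k$ (after an appropriate initial basis change). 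The analogous computation on $\{f_{k+1},\dots,f_{2k}\}$ via the derivation shape in Proposition 3 then gives the $(a_1+a_2+1)$-block of products $[f_{k+i},x]$.

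The heart of the proof is the Leibniz identity bookkeeping. I would introduce general symbols
\[
[x,e_1]=\sum_i \xi_i e_i+\sum_i \eta_i f_i,\quad [x,e_2]=\sum_i \zeta_i e_i+\sum_i \sigma_i f_i,\quad [x,x]=\sum_i \delta_i e_i+\sum_i \theta_i f_i,
\]
and apply $\mathcal{L}(x,e_1,e_1)$, $\mathcal{L}(x,e_i,e_1)$, $\mathcal{L}(x,e_1,f_j)$, $\mathcal{L}(x,f_j,e_1)$, $\mathcal{L}(e_1,x,e_1)$ and $\mathcal{L}(x,e_1,x)$, $\mathcal{L}(x,e_2,x)$, $\mathcal{L}(x,x,e_1)$, $\mathcal{L}(x,x,x)$, to systematically kill most of the coefficients of $[x,e_1]$, $[x,e_2]$, $[x,f_j]$ and $[x,x]$. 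The right-annihilator observation (that $e_i$ for $i\ge 3$ and $f_{k+1},\dots,f_{2k}$ all lie in $\Ann_r$, whereas $[x,x]\in\Ann_r$) together with the derivation property applied to $\mathcal{R}_x$ will then recover the displayed form of the products $[e_i,x]$, and will force $[x,e_2]=\gamma e_{n-2k}$ and reduce $[x,x]$ to $\delta_1 e_{n-2k-1}+\delta_2 e_{n-2k}+\delta_3 f_{k+1}$. Finally, the simplification changes
\[
e_i'=e_i+(\text{lower correction in }f\text{'s}),\qquad x'=x+\sum c_i e_{i-1}+\sum \ell_j f_j,
\]
analogous to those used in the proof of Theorem \ref{thmmu1}, absorb the remaining free coefficients in $[x,e_1]$ and $[x,x]$ into the surviving parameters $\{b_1,b_2,\beta,\gamma,\delta_i,\theta_{n-k+j}\}$.

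The main obstacle is establishing the compatibility relations \eqref{eq11}: these do not arise from a single identity but from comparing the output of $\mathcal{L}(x,x,e_1)=0$ and $\mathcal{L}(x,e_1,x)=0$ (which deliver $\delta_1=-a_1a_{n-2k}$, $(a_1+a_2)\gamma=0$ and $(n-2k-2)a_1\gamma=0$) with $\mathcal{L}(x,e_1,f_1)=0$ and $\mathcal{L}(x,f_j,x)=0$ (which propagate the recursion $\theta_{n-i}=(-1)^i(a_2+1)^i\theta_n$ down the $f_{k+i}$ chain and then close up into the boundary relation $a_1b_2=(-1)^{k-1}(a_2+1)^k\theta_n$). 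I expect the bulk of the calculation to be this bookkeeping, and it will be essential to process the identities in the order above, since each later identity is only manageable once the earlier structural simplifications of $[e_i,x]$ and $[x,f_j]$ are already in place. Once all these constraints are collected, writing down the multiplication table is immediate and the theorem follows.
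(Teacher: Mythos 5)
Your proposal is correct and takes essentially the same route as the paper, which in fact gives no separate argument for this theorem but only the word ``analogously,'' deferring to the method of Theorem \ref{thmmu1}: combine Proposition \ref{prop3} with the $R_2$ case of Theorem \ref{thmQ=1} on the abelian subspace $\mathcal F$, then grind through the Leibniz identities and absorbing basis changes, exactly as you outline. One immaterial slip: $[e_2,e_1]=e_3$ shows $e_1\notin\Ann_r$, not $e_2\notin\Ann_r$ (and indeed $e_2$ may lie in $\Ann_r$ when $\gamma=0$), but only the fact that the $f_i$ are outside $\Ann_r$ is needed to land in the $R_2$ case.
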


We set $J \coloneqq I\setminus \{a_i, b_1, b_2, \beta, \gamma, \delta_{1}, \delta_{2}, \delta_{3}, \theta_{n-k+j}\}$ with $4\leq i \leq n-2k, \ 1\leq j \leq k$.

\begin{lem} \label{lemmu3} Let algebras $R(\mu_3, 1)(a_1,a_2, J)$ and $R(\mu_3, 1)(a_1', a_2', J')$ are isomorphic. Then
\[a_1^\prime=a_1, \quad  a_2^\prime=a_2.\]
\end{lem}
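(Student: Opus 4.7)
The plan is to realize $a_1$ and $a_2$ as invariants of the isomorphism class of $R(\mu_3,1)$ through the spectral data of $\mathcal{R}_x$ on characteristic subquotients of the nilradical. Suppose $\phi\colon R(\mu_3,1)(a_1,a_2,J)\to R(\mu_3,1)(a_1',a_2',J')$ is an isomorphism. Since $\mu_3$ is the unique maximal nilpotent ideal (hence characteristic), $\phi(\mu_3)=\mu_3$; writing $\phi(x')=Hx+n_0$ with $n_0\in\mu_3$, the operator $\mathcal{R}_{\phi(x')}|_{\mu_3}$ equals $H\,\mathcal{R}_x|_{\mu_3}+\mathcal{R}_{n_0}|_{\mu_3}$, and its second summand is nilpotent because $n_0$ lies in $\mu_3$. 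Hence the semisimple parts of $\mathcal{R}_{x'}|_{\mu_3}$ and $H\,\mathcal{R}_x|_{\mu_3}$ are conjugate via $\phi|_{\mu_3}$. The scalar $H$ is pinned to $1$ by the normalization built into the nilradical: the Jordan chain $[f_i,x]=f_i+f_{i+1}$, $[f_k,x]=f_k$ forces $\mathcal{R}_x|_{\mu_3}$ to have eigenvalue $1$ on $\mathrm{span}(f_1,\dots,f_k)$ (as in Theorem~\ref{thmQ=1}), and the same holds in the primed algebra, so $H=1$. Consequently the multisets of eigenvalues of $\mathcal{R}_x|_{\mu_3}$ in the two algebras coincide:
\[
\{a_1\}\cup\{(i-1)a_1+a_2:2\le i\le n-2k\}\cup\{1^{(k)}\}\cup\{(a_1+a_2+1)^{(k)}\}=\text{same with primes}.
\]

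Next I would separate $a_1$ from $a_1+a_2$ by passing to the characteristic filtration $\mu_3\supset\mu_3^2\supset\mu_3^3$. On the $(k+2)$-dimensional quotient $\mu_3/\mu_3^2$, with representatives $\bar{e}_1,\bar{e}_2,\bar{f}_1,\dots,\bar{f}_k$, the induced action of $\mathcal{R}_x$ has eigenvalues $a_1$ (at $\bar{e}_1$), $a_1+a_2$ (at $\bar{e}_2$), and $1$ of multiplicity $k$ (on $\bar{f}_1,\dots,\bar{f}_k$). The decisive tool is the $\phi$-equivariant quadratic map $Q\colon\mu_3/\mu_3^2\to\mu_3^2/\mu_3^3$, $Q(\bar{y})=[\bar{y},\bar{y}]\bmod\mu_3^3$; a direct computation from the multiplication table of $\mu_3$ gives
\[
Q\Bigl(\alpha\bar{e}_1+\beta\bar{e}_2+\sum_{j}\gamma_j\bar{f}_j\Bigr)=\alpha(\alpha+\beta)\,\bar{e}_3+(\alpha+\beta)\sum_{j}\gamma_j\bar{f}_{k+j},
\]
so $Q(\bar{e}_1)=\bar{e}_3\neq 0$ whereas $Q(\bar{e}_2)=0$ and $Q(\bar{f}_j)=0$. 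Consequently the $a_1$-eigenvector of $\mathcal{R}_x|_{\mu_3/\mu_3^2}$ (which has the form $\bar{e}_1$ plus a correction in $\mathrm{span}(\bar{f}_j)$) is singled out as the unique $\mathcal{R}_x$-eigenline on which $Q$ has nonzero $\bar{e}_3$-component. This property is preserved by $\phi$, forcing $a_1'=a_1$; the spectral identity $a_1'+a_2'=a_1+a_2$ then yields $a_2'=a_2$.

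The main obstacle is precisely this last identification: the bare spectrum leaves open the permutation $a_1\leftrightarrow a_1+a_2$, and the quadratic map $Q$ is what breaks the symmetry, encoding the structural asymmetry $[e_1,e_1]=e_3\neq 0$ versus $[e_2,e_2]=0$. The degenerate case $a_2=0$ collapses the two eigenvalues and $a_2'=a_2=0$ then follows from multiplicity matching alone, while coincidences such as $a_1=1$ or $a_1+a_2=1$ are still handled because $Q$ vanishes on the $\bar{f}$-subspace, preventing $\phi(\bar{e}_1)$ from being relocated to the wrong eigenspace.
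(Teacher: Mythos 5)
Your strategy is genuinely different from the paper's (which simply writes the most general change of generators and reads off $a_1'=a_1$, $a_2'=a_2$ from $[e_1',x']$ and $[e_2',x']$ after forcing $H=1$ by a direct computation), and the idea of breaking the $a_1\leftrightarrow a_1+a_2$ symmetry with the equivariant quadratic map $Q$ is attractive. However, as written the argument has two genuine gaps.

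First, the step ``$H=1$'' does not follow from what you say. After passing to the graded quotients you only get an equality of multisets $\operatorname{Spec}\mathcal{R}_{x'}=H\cdot\operatorname{Spec}\mathcal{R}_x$; the fact that the value $1$ occurs in both multisets does not pin $H$ unless you know that $\phi$ carries the specific eigenspace $\mathrm{span}(f_1,\dots,f_k)$ (mod $\mu_3^2$) to its primed counterpart. A priori $\phi$ could relocate it, e.g.\ swap the roles of the eigenvalues $1$ and $a_1+a_2+1$ while rescaling by $H$. To close this you would need to show that $\mathrm{span}(f_1,\dots,f_k)+\mu_3^2$ is characteristic --- it is, being $\{z\in\mu_3:[z,\mu_3]\subseteq\mu_3^3\}$, since $[e_1,e_1]=[e_2,e_1]=e_3\notin\mu_3^3$ while $[f_j,\mu_3]=0$ --- but you never establish this, and without it the inference is a non sequitur. (Relatedly, the claim that the semisimple parts of $\mathcal{R}_{x'}$ and $H\mathcal{R}_x$ are conjugate needs the filtration argument, not just that $\mathcal{R}_{n_0}$ is nilpotent: adding a non-commuting nilpotent operator can change eigenvalues.)

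Second, your identification of $a_1$ fails in the degenerate case $a_1=1$, $b_1\neq0$. There $\mathcal{R}_x$ on $\mu_3/\mu_3^2$ restricted to $\mathrm{span}(\bar e_1,\bar f_1,\dots,\bar f_k)$ is $1\cdot\mathrm{Id}$ plus a single nilpotent Jordan block of size $k+1$, whose only eigenline is $\mathrm{span}(\bar f_k)$; hence there is \emph{no} eigenvector of the form ``$\bar e_1$ plus a correction in $\mathrm{span}(\bar f_j)$,'' and no eigenline at all on which $Q$ has nonzero $\bar e_3$-component. Your parenthetical description of the $a_1$-eigenvector is false there, and the asserted ``unique eigenline'' does not exist, so the argument as stated yields nothing; the closing remark about ``coincidences such as $a_1=1$'' does not address this. (The case is patchable --- emptiness of your invariant set itself forces $a_1=1$ on both sides --- but that analysis is missing.) You would also need to make ``nonzero $\bar e_3$-component'' basis-free, e.g.\ by observing that $\mathrm{span}(\bar f_{k+1},\dots,\bar f_{2k})\subset\mu_3^2/\mu_3^3$ is characteristic, before declaring the property preserved by $\phi$.
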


\begin{proof} Let us take the general transformation of generators basis elements of the algebra $R(\mu_3)(a_1, a_2, J)$ in the following form:
\[\begin{array}{ll}
e_1^\prime=\sum\limits_{i=1}^{n-2k}A_{1,i}e_i+\sum\limits_{i=1}^{2k}B_{1,i}f_i,&
e_2^\prime=\sum\limits_{i=1}^{n-2k}A_{2,i}e_i+\sum\limits_{i=1}^{2k}B_{2,i}f_i,\\[1mm]
f_i^\prime=\sum\limits_{j=1}^{n-2k}C_{i,j}e_j+\sum\limits_{j=1}^{2k}D_{i,j}f_j,
\ 1\leq i\leq k, &
x^\prime=Hx+\sum\limits_{i=1}^{n-2k}E_{i}e_i+\sum\limits_{i=1}^{2k}F_{i}f_i.\end{array}\]

Then from the products $[e_i^\prime,e_1^\prime]=e_{i+1}^\prime, \ \ 2\leq i\leq
n-2k-1$, we obtain
\begin{align*}
e_{3}^\prime & =A_{1,1}\sum\limits_{i=3}^{n-2k}A_{2,i-1}e_i+A_{2,2}\sum\limits_{i=1}^{k}B_{1,i}f_{k+i},\\
e_{i}^\prime & =A_{1,1}^{i-2}\sum\limits_{j=i}^{n-2k}A_{2,j-i+2}e_j,\ \ 4\leq i\leq n-2k.
\end{align*}

Moreover, the equalities
\[[e_1^\prime,e_1^\prime]=[e_2^\prime,e_2^\prime]=[f_i^\prime,e_1^\prime]=[e_3^\prime,f_i^\prime]=0, \ 1\leq i\leq k,\]
imply
\[\begin{array}{ll}
A_{1,i}=0,& 2\leq i\leq n-2k-1,\\[1mm]
A_{2,1}=0=B_{2,i}=C_{i,j}=0,& 1\leq i\leq k, \ \ 1\leq j\leq n-2k-1.
\end{array}\]

The rest basis elements $f_{k+i}^\prime, \ 1\leq i\leq k$, can be obtained from the products $[e_2^\prime,f_i^\prime]=f_{k+i}^\prime, \ \ 1\leq i\leq k$. Namely, we have
\[f_{k+i}^\prime=A_{2,2}\sum\limits_{j=1}^{k}D_{i,j}f_{k+j},\ \ 1\leq i\leq k.\]

Consider $[x^\prime,f_k^\prime]=-f_k^\prime$. Then we derive
\begin{align*}
(D_{k,i}+D_{k,i+1})H & =D_{k,i+1}, && 1\leq i\leq k-1,\\
D_{k,1}H & =D_{k,1},\\
D_{k,i}E_2 & =-D_{k,k+i}, && 1\leq i\leq k,\\
C_{k,n-2k} & =0.
\end{align*}

If $H\neq1$, then $D_{k,i}=0, \  1\leq i\leq k$, which implies $f_{2k}^\prime=0$, this is a contradiction. Therefore, $H=1$ and
\[D_{k,i}=D_{k,k+i}=0,  \qquad 1\leq i\leq k-1.\]

Now from $[x^\prime,f_i^\prime]=-f_i^\prime-f_{i+1}^\prime$ with $1\leq i\leq k-1$, we get restrictions:
\begin{align*}
D_{i,j} & =0,  && 1\leq j<i\leq k,\\
D_{i,j} & =D_{1,j-i+1},  &&  1\leq i\leq j\leq k,\\
C_{i,n-2k} & =0,  &&  1\leq i\leq k-1,\\
D_{i,j}E_2 & =-D_{i,k+j}-D_{i+1,k+j}, &&  1\leq i\leq k-1, \ \ 1\leq j\leq k.
\end{align*}

By considering the products $[e_1^\prime,x^\prime]$ and $[e_2^\prime,x^\prime]$, we deduce
\[a_1^\prime=a_1, \qquad a_{2}^\prime=a_{2}.\]

\end{proof}

Let us introduce the notations
\[\begin{array}{ll}
I_1=\{a_i, \beta, \gamma, \delta_2\},& 4 \leq i \leq n-2k,\\[1mm]
I_2=\{a_2=-1, a_i, b_2, \beta, \delta_3, \theta_n\}, & 4 \leq i \leq n-2k,\\[1mm]
I_3=\{a_i, \beta\}, & 2\leq i \leq n-2k, \ i\neq 3, a_2\notin\{-1, 0\},\\[1mm]
I_4=\{a_1, a_2, a_{n-2k}, b_1, b_2, \delta_1, \delta_2, \delta_3, \theta_{n-k+i}\}, & 1 \leq i \leq k,\\[1mm]
\end{array}
\]
where for parameters from the set $I_4$ we have the relations

\begin{align*}
\delta_{1} & =-a_1a_{n-2k},  \\
b_{2} & =\frac{(-1)^{k-1}(a_2+1)^k\theta_{n}}{a_1},\\
\theta_{n-k+i} & =(-1)^{k-i}(a_2+1)^{k-i}\theta_{n}, \qquad 1\leq i\leq k-1.
\end{align*}

\begin{prop} \[R(\mu_3, 1)(I)=\bigcup_{j=1}^{4} R(\mu_3, 1)(I_j).\]
\end{prop}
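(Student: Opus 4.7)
The plan is to perform a case analysis driven by the algebraic restrictions in \eqref{eq11}, and then to reduce each case to one of the four prescribed subfamilies via explicit changes of basis. Since the standing hypothesis $n-p \geq 4$ with $p=2k+1$ gives $n-2k-2 \geq 3 \neq 0$, the second restriction collapses to $a_1\gamma = 0$. Combined with $(a_1+a_2)\gamma = 0$, this forces the initial dichotomy: either $\gamma = 0$, or $\gamma \neq 0$ together with $a_1 = a_2 = 0$. All subsequent casework branches from this.

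In the case $\gamma \neq 0$ (hence $a_1 = a_2 = 0$), the relation $a_1 b_2 = (-1)^{k-1}(a_2+1)^k \theta_n$ forces $\theta_n = 0$, and consequently $\theta_{n-i} = 0$ for $0 \leq i \leq k-1$, while $\delta_1 = -a_1 a_{n-2k} = 0$ automatically. One then absorbs the residual $b_1$, $b_2$, $\delta_3$, and $a_3$ by successive shifts of the form $e_1 \mapsto e_1 + (\text{tail in } \mathcal F \cup \{e_2,\dots,e_{n-2k}\})$, $e_2 \mapsto e_2 + (\text{tail})$, and $x \mapsto x + (\text{tail})$; the freedom in these corrections is exactly matched to the parameters one wishes to kill, leaving the multiplication determined by $\{a_i,\beta,\gamma,\delta_2\}$ for $4 \leq i \leq n-2k$, i.e.\ the family $I_1$. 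When $\gamma = 0$ and $a_1 \neq 0$, the relations in \eqref{eq11} determine $\delta_1$, $b_2$, and all $\theta_{n-k+i}$ with $1 \leq i \leq k-1$ in terms of the remaining parameters; the intermediate coefficients $a_i$ for $4 \leq i \leq n-2k-1$ and the parameter $\beta$ can be eliminated by a change $e_1 \mapsto e_1 + \sum A_i e_i$ whose $A_i$ are chosen recursively so that $[e_1,x]$ and $[e_2,x]$ keep only the distinguished terms; this produces the family $I_4$ with its prescribed relations.

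Finally, in the case $\gamma = 0$ and $a_1 = 0$, one has $\delta_1 = 0$ and $(a_2+1)^k \theta_n = 0$. If $a_2 = -1$, the parameter $\theta_n$ remains free but $\theta_{n-k+i} = 0$ for $i < k$; absorbing $b_1$, $\delta_1$, $\delta_2$, and the vanishing $\theta_{n-k+i}$ through shifts analogous to the first case yields $I_2$. If $a_2 \notin \{-1,0\}$, then $\theta_n = 0$ and hence all $\theta_{n-k+i} = 0$, and absorbing the remaining auxiliary parameters gives $I_3$. The borderline $a_2 = 0$ coincides with the previously treated case and lands in $I_1$ (with $\gamma = 0$). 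The main obstacle is bookkeeping: in each case one must verify that the prescribed change of basis preserves the structure of $\mu_3$ on the nilradical while simultaneously normalizing all the scalars one wishes to eliminate. The reductions are not deep but require care in choosing the correction coefficients recursively from bottom to top of the filtration; once this is carried out, the equality $R(\mu_3,1)(I) = \bigcup_{j=1}^{4} R(\mu_3,1)(I_j)$ follows.
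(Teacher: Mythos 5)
Your proposal is correct and follows essentially the same route as the paper: a case analysis on $a_1$, $a_2$ (equivalently on $\gamma$) dictated by the restrictions \eqref{eq11}, followed by explicit basis changes $e_1\mapsto e_1+(\text{tail})$, $x\mapsto x+(\text{tail})$ with denominators $a_2+1$ or $(i-2)a_1$ to absorb the redundant parameters in each branch. The only difference is cosmetic --- you branch first on $\gamma$ while the paper branches first on $a_1$ --- and the four endpoints $I_1,\dots,I_4$ coincide.
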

\begin{proof} Thanks to Lemma \ref{lemmu3} for analysis of equalities \eqref{eq11} it is sufficient to consider the following cases.

{\bf Case 1.} Let $a_1=0$. Then we get $\delta_1=\theta_{n-k+i}=0, \ 1\leq i\leq k-1$ and
$a_2\gamma=(a_2+1)\theta_{n}=0$.

By applying the change $e_1^\prime=e_1+\sum\limits_{i=1}^{k}(-1)^{i}b_{1}f_{i}$, we obtain $b_{1}=0$.

{\bf Case 1.1.} Let $a_2=0$. Then $\theta_{n}=0$.

By taking the change of basis elements
\[e_1^\prime=e_1+\sum\limits_{i=1}^{k}(-1)^{i}b_{2}f_{k+i}, \qquad  x^\prime=x+\sum\limits_{i=1}^{k}(-1)^{i}\delta_{3}f_{k+i},\]
we can assume $b_{2}=\delta_{3}=0$. Thus, the subfamily of algebras $R(\mu_3, 1)(I_1)$ is obtained.

{\bf Case 1.2.} Let $a_2\neq0$. Then we have $\gamma=(a_2+1)\theta_{n}=0$. By setting $x^\prime=x-\frac{\delta_{2}}{a_2}e_{n-2k}$, we get $\delta_{2}=0$.

{\bf Case 1.2.1.} Let $a_2=-1$. Then we obtain the subfamily $R(\mu_3, 1)(I_2)$.

{\bf Case 1.2.2.} Let $a_2\neq-1$. Then $\theta_{n}=0$.

By putting
\[e_1^\prime=e_1+\sum\limits_{i=1}^{k}(-1)^{i}\frac{b_{2}}{(a_2+1)^i}f_{k+i}, \qquad x^\prime=x+\sum\limits_{i=1}^{k}(-1)^{i}\frac{\delta_{3}}{(a_2+1)^i}f_{k+i},\] we get $b_{2}=\delta_{3}=0$.

Thus, we have the class of algebras $R(\mu_3, 1)(I_3)$.

{\bf Case 2.} Let $a_1\neq0$. Then $\gamma=0$. By applying the change
\[e_1^\prime=e_1, \quad  e_i^\prime=e_i+\sum\limits_{j=i+1}^{n-2k}A_{j-i+2}e_{j},  \qquad 2\leq i\leq n-2k,\]
with
\[A_{3}=0, \quad a_3=0, \qquad
A_{i}=-\frac{1}{(i-2)a_1} \, \Big(\sum\limits_{j=3}^{i-1}A_ja_{i+2-j}+a_i\Big),
\ 4\leq i\leq n-2k-1,\]
\[ \qquad   \qquad  \quad A_{n-2k}=-\frac{1}{(n-2k-2)a_1}\,\Big(\sum\limits_{j=3}^{n-2k-1}A_ja_{n-2k+2-j}+
\beta\Big),\]
we derive $\beta=a_{i}=0, \ 4\leq i\leq n-2k-1$.

Therefore, we obtain the class of algebras $R(\mu_3, 1)(I_4)$.
\end{proof}

\begin{rem} It should be noted that algebras from different subfamilies $R(\mu_3, 1)(I_j), 1\leq j \leq 4$, are not isomorphic.
\end{rem}

\section*{ Acknowledgements}

This work was partially supported by  Ministerio de Econom\'ia y Competitividad (Spain),
grant MTM2013-43687-P (European FEDER support included), by Xunta de Galicia, grant GRC2013-045 (European FEDER support included)
 and by Kazakhstan Ministry of Education and Science, grant 0828/GF4: ``Algebras, close to Lie:
cohomologies, identities and deformations''.
The last named author was partially supported by a grant from the Simons Foundation.


\begin{thebibliography}{99}

\bibitem{AnCaGa1} J. M. Ancochea Berm{\'u}dez, R. Campoamor-Stursberg, L. Garc{\'{\i}}a Vergnolle
{\it Indecomposable Lie algebras with nontrivial Levi decomposition cannot have filiform radical.}
Int. Math. Forum, vol. 1(5-8), 2006, p. 309--316.

\bibitem{AnCaGa1x} J. M. Ancochea Berm{\'u}dez, R. Campoamor-Stursberg, L. Garc{\'{\i}}a~Vergnolle
{\it Solvable Lie algebras with naturally graded nilradicals and their invariants.} J. Phys. A, vol. 39(6), 2006, p. 1339--1355.

\bibitem{AnCaGa2} J. M. Ancochea Berm{\'u}dez, R. Campoamor-Stursberg, L. Garc{\'{\i}}a~Vergnolle
{\it Classification of Lie algebras with naturally graded quasi-filiform nilradicals.} J. Geom. Phys., vol. 61(11), 2011, p. 2168--2186.

\bibitem{Omir1} Sh.A. Ayupov, B.A. Omirov {\it On some classes of nilpotent Leibniz algebras.} Siberian Math. J., vol. 42(1), 2001, p. 15--24.

\bibitem{Balavoine} D. Balavoine {\it D\'eformations et rigidit\'e g\'eom\'etrique des alg\`ebres de Leibniz.} Comm.  Algebra, vol. 24, 1996, p. 1017--1034.

\bibitem{Bar} D.W. Barnes {\it On Levi's theorem for Leibniz algebras.} Bull. Aust. Math. Soc., vol. 86(2), 2012, p. 184--185.

\bibitem{Burde} D. Burde {\it Degenerations of 7-dimensional nilpotent Lie algebras.} Comm. Algebra, vol. 33(4), 2005, p. 1259--1277.

\bibitem{Gomez} J.M. Cabezas, J.R. G\'{o}mez, A. Jimenez-Merch\'{a}n  {\it Family of $p$-filiform Lie algebras.} Algebra and operator theory (Tashkent, 1997), Kluwer Acad. Publ., Dordrecht, 1998, p. 93--102.

\bibitem{Cabezas} J.M. Cabezas, E. Pastor {\it Naturally graded $p$-filiform Lie algebras in arbitrary finite dimension.} J. Lie Theory, vol. 15(2), 2005, p. 379--391.

\bibitem{p-filLeibniz}  L.M. Camacho, J.R. G\'{o}mez, A.J. Gonz\'{a}lez, B.A. Omirov {\it The classification of naturally graded $p$-filiform Leibniz algebras.} Comm. Algebra, vol. 39(1), 2011, p. 153--168.

\bibitem{Abror}  E.M. Ca\~{n}ete, A. Kh. Khudoyberdiyev {\it The classification of 4-dimensional Leibniz algebras.} Lin. Alg. Appl., vol. 439(1), 2013, p. 273--288.

\bibitem{Nulfilrad} J.M. Casas, M. Ladra, B.A. Omirov, I.A. Karimjanov {\it Classification of solvable Leibniz algebras with null-filiform nilradical.} Lin. Multilin. Algebra, vol. 61(6), 2013, p. 758--774.

\bibitem{GozKhak} M. Goze, Yu. Khakhimdjanov, {\it Nilpotent Lie algebras.} Kluwer Academic Publishers, Dordrecht, vol. 361, 1996, 336 pp.

\bibitem{Gersten} M. Gerstenhaber {\it On the deformation of rings and algebras, I, III.} Ann. of Math. (2), vol. 79, 1964, p. 59--103.

\bibitem{O'Halloran1} F. Grunewald, J. O'Halloran {\it A characterization of orbit closure and applications.} J. Algebra, vol. 116(1), 1988, p. 163--175.

\bibitem{O'Halloran2}  F. Grunewald, J. O'Halloran {\it Varieties of nilpotent Lie algebras of dimension less than six.} J. Algebra, vol. 112(2), 1988, p. 315--325.

\bibitem{Loday} J.-L. Loday {\it Une version non commutative des alg\`ebres de Lie: les alg\`ebres de Leibniz.}
Enseign. Math. (2), vol. 39(3-4), 1993, p. 269--293.

\bibitem{Loday1} J.-L. Loday, T. Pirashvili {\it Universal enveloping algebras of Leibniz algebras and (co)homology.} Math. Ann., vol. 296, 1993, p. 139--158.

\bibitem{Mal} A.I. Malcev {\it Solvable Lie algebras.} Amer. Math. Soc. Translation, vol. 36(27), 1950.

\bibitem{Mub} G.M. Mubarakzjanov {\it On solvable Lie algebras (Russian).} Izv. Vys\v s. U\v cehn. Zaved. Matematika, vol. 32(1), 1963, p. 114--123.

\bibitem{NdWi} J.C. Ndogmo, P. Winternitz {\it Solvable Lie algebras with abelian nilradicals.}
J. Phys. A, vol. 27(2), 1994, p. 405--423.

\bibitem{Nijen} A. Nijenhuis, R.W. Richardson, {\it Cohomology and deformations in graded Lie algebras.}
Bull. Amer. Math. Soc., vol. 72, 1966, p. 1--29.

\bibitem{SnWi} L. {\v{S}}nobl, P. Winternitz {\it A class of solvable Lie algebras and their Casimir invariants.} J. Phys. A, vol. 38(12), 2005, p. 2687--2700.

\bibitem{TrWi} S. Tremblay, P. Winternitz {\it Solvable Lie algebras with triangular nilradicals.} J. Phys. A, vol. 31(2), 1998, p. 789--806.

\bibitem{WaLiDe} Y. Wang, J. Lin, Sh. Deng {\it Solvable Lie algebras with quasifiliform nilradicals.}
Comm. Algebra, vol. 36(11), 2008, p. 4052--4067.

\end{thebibliography}
\end{document}